\newcommand\cyr
\newtheorem{theorem}{Theorem}[section]
\newtheorem{rem} [theorem] {Remark}
\newtheorem{lemma}[theorem]{Lemma}
\newtheorem{ex}[theorem]{Example}
\newcommand{\ovprt}{\overline{\partial}}
\newcommand{\ovli}{\overline}
\numberwithin{equation}{section}
\title{ The $\partial$-complex on the Fock space}
\author{ Friedrich Haslinger}
\thanks{Partially supported by the Austrian Science Fund (FWF) project  P28154.}
 \address{ F. Haslinger: Fakult\"at  f\"ur Mathematik, Universit\"at Wien,
Oskar-Morgenstern-Platz 1, A-1090 Wien, Austria}
\email{ friedrich.haslinger@univie.ac.at}
\keywords{$\partial$-complex, Fock space, spectrum}
\subjclass[2010] {Primary 30H20, 32A36, 32W50 ; Secondary 47B38} 
\begin{document}

\maketitle
\begin{abstract}We study certain densely defined unbounded operators on the Fock space. These are the annihilation and creation operators of quantum mechanics. In several complex variables we have the $\partial$-operator and its adjoint $\partial^*$ acting on $(p,0)$-forms with coefficients in the Fock space. We consider the corresponding $\partial$-complex and study spectral properties of the corresponding complex Laplacian $\tilde \Box = \partial \partial^* + \partial^*\partial.$ Finally we study a more general complex Laplacian $\tilde \Box_D = D D^* + D^* D,$ where $D$ is a differential operator of polynomial type, to find the canonical solutions to the inhomogeneous equations $Du=\alpha$ and $D^*v=\beta.$

\end{abstract}

\vskip 1.5 cm

\section{Introduction}
\vskip 1 cm
Purpose of this paper is to consider the $\partial$-complex and to use the powerful classical methods of the $\overline \partial$-complex based on the theory of unbounded densely defined operators on Hilbert spaces, see \cite{Has10}, \cite{Str}.
The main difference to the classical theory is that the underlying Hilbert space is now not an $L^2$-space but a closed subspace of an $L^2$-space - the Fock space of entire functions $A^2(\mathbb C^n, e^{-|z|^2}).$
It is well known that the differentiation with respect to $z_j$ defines an unbounded operator on $A^2(\mathbb C^n, e^{-|z|^2}).$ We consider the operator
$$\partial f = \sum_{j=1}^n \frac{\partial f}{\partial z_j}\, dz_j,$$
which is densely defined on $A^2(\mathbb C^n, e^{-|z|^2})$ and maps to $A^2_{1,0}(\mathbb C^n, e^{-|z|^2})$,
the space of $(1,0)$-forms with coefficients in $A^2(\mathbb C^n, e^{-|z|^2}).$ In general, we get the $\partial$-complex
$$  A^2_{(p-1,0)}(\mathbb{C}^n, e^{-|z|^2}) 
\underset{\underset{\partial^* }
\longleftarrow}{\overset{\partial }
{\longrightarrow}} A^2_{(p,0)}(\mathbb{C}^n, e^{-|z|^2}) \underset{\underset{\partial^* }
\longleftarrow}{\overset{\partial }
{\longrightarrow}} A^2_{(p+1,0)}(\mathbb{C}^n, e^{-|z|^2}),
$$ 
where $1\le p \le n-1$ and $\partial^*$ denotes the adjoint operator of $\partial.$
 
We will choose the domain ${\text{dom}}(\partial)$ in such a way that $\partial$ becomes a closed operator on 
$A^2(\mathbb C^n, e^{-|z|^2}).$  In addition  we get that the corresponding complex Laplacian
$$\tilde \Box_p = \partial^* \partial + \partial \partial^*,$$
with ${\text{dom}} (\tilde\Box_p) =\{ f\in {\text{dom}}(\partial ) \cap {\text{dom}}(\partial^* ) : \partial f \in {\text{dom}}(\partial ^*) \ {\text{and}} \ \partial f^* \in {\text{dom}}(\partial )\}$ 
 acts as unbounded self-adjoint operator on $A^2_{(p,0)}(\mathbb{C}^n, e^{-|z|^2}).$ 
 We point out that in this case the complex Laplacian is  a differential operator of order one. Nevertheless we can use the general features of a Laplacian for these differential operators of order one.
 
 Using an estimate which is analogous to the basic estimate for the $\overline \partial$-complex, we obtain that $\tilde \Box_p$ has a bounded invers $\tilde N_p: A^2_{(p,0)}(\mathbb{C}^n, e^{-|z|^2}) \longrightarrow A^2_{(p,0)}(\mathbb{C}^n, e^{-|z|^2})$ and we show that $\tilde N_p$ is even compact. In addition we compute the spectrum of $\tilde \Box_p.$
 
 \vskip 0.3 cm
 The inspiration for this comes from quantum mechanics, where the annihilation operator $a_j$ can be represented by the differentiation with respect to $z_j$ on $A^2(\mathbb C^n, e^{-|z|^2})$ and its adjoint, the creation operator $a^*_j,$ by the multiplication by $z_j,$ both operators being unbounded densely defined, see \cite{F1}, \cite{FY}. One can show that $A^2(\mathbb C^n, e^{-|z|^2})$ with this action of the $a_j$ and $a^*_j$ is an irreducible representation $M$ of the Heisenberg group, by the Stone-von Neumann theorem it is the only one up to unitary equivalence. Physically $M$ can be thought of as the Hilbert space of a harmonic oscillator with $n$ degrees of freedom and Hamiltonian operator
$$H= \sum_{j=1}^n \frac{1}{2} (P_j^2+Q_j^2) =  \sum_{j=1}^n \frac{1}{2} (a_j^* a_j+a_ja_j^*).$$

 In the second part we consider a general plurisubharmonic weight function $\varphi :\mathbb C^n \longrightarrow \mathbb R$ and the corresponding weighted space of entire functions $A^2(\mathbb C^n, e^{-\varphi}).$ The $\partial$-complex has now the form 
 $$ A^2_{(p-1,0)}(\mathbb{C}^n, e^{-\varphi}) 
\underset{\underset{\partial^*_\varphi }
\longleftarrow}{\overset{\partial }
{\longrightarrow}} A^2_{(p,0)}(\mathbb{C}^n, e^{-\varphi}) \underset{\underset{\partial^*_\varphi }
\longleftarrow}{\overset{\partial }
{\longrightarrow}} A^2_{(p+1,0)}(\mathbb{C}^n, e^{-\varphi}),$$
where $1\le p \le n-1.$ 
Finally we prove a formula which is analogous to the Kohn-Morrey formula for the classical $\overline \partial$-complex, see \cite{Has10}, \cite{Str} or \cite{ChSh}. We will show that 
$$\|\partial u\|^2_\varphi + \|\partial^*_\varphi u\|^2_\varphi = \sum_{j,k=1}^n \int_{\mathbb C^n} \left | \frac{\partial u_j}{\partial z_k} \right |^2 \,  e^{-\varphi}\,d\lambda + \sum_{j,k=1}^n \int_{\mathbb C^n} \frac{\partial^2 \varphi}{\partial z_k \partial \overline z_j}\, u_j \overline u_k \,  e^{-\varphi}\,d\lambda + T,$$ 
for $u\in {\text{dom}}(\partial ) \cap {\text{dom}}(\partial^*_\varphi ),$ where the term $T$ is non-positive.
\vskip 0.3 cm
Finally we investigate operators of the $Du = \sum_{j=1}^n p_j (u)\, dz_j,$
where $u\in A^2(\mathbb C^n, e^{-|z|^2})$ and $p_j(\frac{\partial}{\partial z_1}, \dots , \frac{\partial}{\partial z_n})$
are polynomial differential operators with constant coefficients. Differential operators of polynomial type on the Fock space were also investigated by J.D. Newman and H. Shapiro in \cite{NS1} and \cite{NS2} and by H. Render \cite{Ren} in the real analytic setting.
Replacing $\partial$ by $D$ one gets a corresponding complex Laplacian $\tilde \Box_D = DD^*+D^*D,$ for which one can use duality and the machinery of the $\ovprt$-Neumann operator  (\cite{Ko1}, \cite{Ko2}) in order to prove existence and boundedness of the inverse to $\tilde \Box_D$ and to find the canonical solutions to the inhomogeneous equations $Du=\alpha$ and $D^*v=\beta.$

\vskip 1 cm

\section{The Fock space}
\vskip 1cm
We consider the Fock space\index{Fock space} $A^2(\mathbb{C}^n, e^{-|z|^2})$ consisting of all entire functions $f$ such that
$$\|f\|^2= \int_{\mathbb{C}^n} |f(z)|^2 \, e^{-|z|^2}\, d\lambda (z) < \infty.$$
It is clear, that the Fock space is a Hilbert space with the inner product
$$(f,g) =\int_{\mathbb{C}^n} f(z) \, \overline{g(z)} \,  e^{-|z|^2}\, d\lambda (z).$$
Setting $n=1,$ we obtain for $f\in A^2(\mathbb{C}, e^{-|z|^2})$
that
\begin{eqnarray*}
|f(z)|&\le & \frac{1}{\pi r^2}\ \int_{D_r(z)} e^{|w|^2/2} \, \ |f(w)|\, e^{-|w|^2/2}\, d\lambda (w)\\
&\le &
\frac{1}{\pi r^2}\ \left ( \int_{D_r(z)} e^{|w|^2}\, d\lambda (w) \right )^{1/2} \ 
\left ( \int_{D_r(z)} |f(w)|^2\, e^{-|w|^2}\, d\lambda (w) \right )^{1/2}\\
&\le & C \left ( \int_{\mathbb{C}} |f(w)|^2\, e^{-|w|^2}\,d\lambda (w) \right )^{1/2}\\
&\le & C  \|f\| , 
\end{eqnarray*}
where $C$ is a constant only depending on $z.$ 
In addition, for each compact subset $L$ of $\mathbb C$ there exists a constant $C_L>0$ such that 
\begin{equation}\label{comp1}
\sup_{z\in L}|f(z)| \le C_L \, \|f\|,
\end{equation}
for all $f\in  A^2(\mathbb{C}, e^{-|z|^2}).$

For several variables one immediately gets an analogous estimate.
This implies that  the Fock space $A^2(\mathbb{C}^n, e^{-|z|^2})$ has the reproducing property. The monomials $\{z^\alpha \}$ constitute an orthogonal basis, where $\alpha=(\alpha_1, \dots, \alpha_n)\in \mathbb N_0^n$ is a multiindex,  and the norms of the monomials are 
\begin{eqnarray*}
\| z^\alpha \|^2 &=& \int_{\mathbb{C}} |z_1|^{2\alpha_1}\, e^{-|z_1|^2}\, d\lambda(z_1) \dots 
 \int_{\mathbb{C}} |z_n|^{2\alpha_n}\, e^{-|z_n|^2}\, d\lambda(z_n)\\
&=&  (2\pi)^n \, \int_0^\infty r^{2\alpha_1 +1} e^{-r^2}\, dr \dots  \int_0^\infty r^{2\alpha_n +1} e^{-r^2}\, dr\\
&=& \pi^n \alpha_1! \dots \alpha_n! .
\end{eqnarray*}
 It follows that each function $f\in A^2(\mathbb{C}^n, e^{-|z|^2})$ can be written in the form
$$ f = \sum_\alpha f_\alpha \varphi_\alpha,$$
where 
\begin{equation}\label{vons}
\varphi_\alpha (z) = \frac{z^\alpha}{\sqrt{\pi^n \alpha !}}\ \ {\text{and}} \ \ \sum_\alpha |f_\alpha |^2 < \infty
\end{equation}
and $\alpha! = \alpha_1! \dots \alpha_n!.$

Hence the Bergman kernel of $A^2(\mathbb{C}^n, e^{-|z|^2})$ is of the form
\begin{equation}
\label{sec: fockkernel}
K(z,w)= \sum_\alpha \frac{z^\alpha \overline w^\alpha }{\| z^\alpha \|^2}= 
 \frac{1}{\pi^n} \sum_{k=0}^\infty \, \sum_{|\alpha|=k} \,  \frac{z^\alpha \overline w^\alpha}{ \alpha_1! \dots \alpha_n!} = \frac{1}{\pi^n} \, \exp(z_1\overline w_1 + \dots  +z_n \overline w_n).
 \end{equation}
See \cite{Zh} for an extensive study of the Fock space.

\vskip 0.5 cm

We point out that the space $A^2(\mathbb{C}, e^{-|z|^2})$ serves for a representation of the states in quantum mechanics (see \cite{FY}), where
$$a(f) = \frac{df}{dz} $$
is the annihilation operator and
$$a^*(f) = z f$$
is the creation operator, both of them being densely defined unbounded operators on $A^2(\mathbb{C}, e^{-|z|^2}).$ 
The span of the finite linear combinations of the basis functions $\varphi_\alpha$ is dense in $A^2(\mathbb{C}, e^{-|z|^2}).$ Hence both operators $a$ and $a^*$ are densely defined.

The function 
$$F(z) = \sum_{k=2}^\infty \frac{\varphi_k(z)}{\sqrt{k(k-1)}} \in A^2(\mathbb{C}, e^{-|z|^2}),$$
but 
$$F'(z) = \sum_{k=1}^\infty \frac{\varphi_k(z)}{\sqrt{k}} \notin A^2(\mathbb{C}, e^{-|z|^2}),$$
and 
$$G(z) = \sum_{k=0}^\infty \frac{\varphi_k(z)}{k+1} \in A^2(\mathbb{C}, e^{-|z|^2}), $$
but 
$$zG(z) =  \sum_{k=1}^\infty \frac{\varphi_k(z)}{\sqrt{k}} \notin A^2(\mathbb{C}, e^{-|z|^2}),$$
hence both operators $a$ and $a^*$ are unbounded operators on $A^2(\mathbb{C}, e^{-|z|^2}).$

But taking a primitive of a function $f\in  A^2(\mathbb{C}, e^{-|z|^2})$ yields a bounded operator 

$$T: A^2(\mathbb{C}, e^{-|z|^2}) \longrightarrow  A^2(\mathbb{C}, e^{-|z|^2});$$
let 

$$f(z) = \sum_{k=0}^\infty f_k \frac{z^k}{\sqrt{\pi}\sqrt{k!}}, \ \ \sum_{k=0}^\infty |f_k|^2 < \infty.$$
Then the function 

$$h(z) = \sum_{k=0}^\infty f_k \frac{z^{k+1}}{\sqrt{\pi}\sqrt{k+1}\sqrt{(k+1)!}}$$
defines a primitive of $f$ and we can write
$$ T(f) =h= \sum_{k=1}^\infty \frac{1}{\sqrt{k!}} \, (f, \tilde \varphi_k) \varphi_k,$$
where $\tilde \varphi_k =\varphi_{k-1}$ and the constant term in the Taylor series expansion of the primitive is always $0$.
This implies immediately that $T$ is even a compact operator.

This  is also a special result from the theory of Volterra-type integration operators on the Fock space  of the form
$$T_gf (z) = \int_0^z f g' \, d\zeta,$$
see \cite{Oliv}.

\vskip 0.3 cm

Next we define the domain of the operator $a$  to be
$${\text{dom}} (a) = \{ f\in A^2(\mathbb{C}, e^{-|z|^2}) : \, f' \in A^2(\mathbb{C}, e^{-|z|^2}) \}.$$
Then  ${\text{dom}} (a^*)$ consists of all functions $g\in {\text{dom}} (a)$ such that the densely defined linear functional $L(f)=(a(f),g)$ is continuous on ${\text{dom}} (a).$ This implies that there exists a function $h\in  
A^2(\mathbb{C}, e^{-|z|^2}),$ such that $L(f)=(a(f),g)=(f,h).$

Next we show that
$${\text{dom}} (a^*) = \{ g\in A^2(\mathbb{C}, e^{-|z|^2}) : \, zg \in A^2(\mathbb{C}, e^{-|z|^2}) \}.$$

Let $f\in {\text{dom}} (a)$ and $g\in {\text{dom}} (a^*).$
Then
\begin{eqnarray*}
(a(f), g)&=& \int_{\mathbb C} \frac{df(z)}{dz}\, \overline{g(z)}\,e^{-|z|^2} \, d\lambda (z) \\
&=& - \int_{\mathbb C} f(z) \, \frac{d}{dz} ( \overline{g(z)}\,e^{-|z|^2}) \, d\lambda (z) \\
&=&  \int_{\mathbb C} f(z) \,\overline{zg(z)}\,e^{-|z|^2} \, d\lambda (z)\\
&=& (f, a^*(g)),
\end{eqnarray*}
where we used integration by parts in the first step (see \eqref{partint} for a detailed proof) and that 
$$\frac{d}{dz} ( \overline{g(z)}\,e^{-|z|^2})= \overline{g(z)} (-\overline z \, e^{-|z|^2}).$$

An alternative proof uses Taylor series expansion: let 
$$f(z) = \sum_{k=0}^\infty f_k \varphi_k(z) \ \ {\text{and}} \ \ g(z) = \sum_{k=0}^\infty g_k \varphi_k(z),$$
where $(f_k)_k, (g_k)_k \in l^2.$ Then we have
$$
(a(f), g)= \sum_{k=0}^\infty \sqrt{k+1}\, f_{k+1} \, \overline{g_k} =(f, a^*(g)).$$

\begin{rem}\label{1basic}
(a) We point out that the commutator satisfies $[a,a^*]=I,$ which is of importance in quantum mechanics,
see \cite{FY}.

(b) For $f\in {\text{dom}} (a)\cap {\text{dom}} (a^*)$ we get from the last results that
\begin{equation}\label{2basic}
\|a(f)\|^2 + \|a^*(f)\|^2 = 2\|a(f)\|^2 + \|f\|^2.
\end{equation}
\end{rem}

\begin{lemma}\label{closed} The operators $a$ and $a^*$ are densely defined operators on $ A^2(\mathbb{C}, e^{-|z|^2})$ with closed graph.

\end{lemma}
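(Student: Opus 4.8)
The plan is to treat density and closedness separately; density has essentially already been recorded in the preceding discussion, so the real work lies in the closed-graph property, which I would derive from the reproducing estimate \eqref{comp1} together with the Weierstrass convergence theorem for holomorphic functions.

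For density, the finite linear combinations of the basis functions $\varphi_\alpha$ form a dense subspace of $A^2(\mathbb{C}, e^{-|z|^2})$, and every such combination is a polynomial, hence lies in both $\mathrm{dom}(a)$ and $\mathrm{dom}(a^*)$. Therefore both operators are densely defined, and nothing more is required on that point.

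For the closedness of $a$, I would take a sequence $(f_n)$ in $\mathrm{dom}(a)$ with $f_n \to f$ in $A^2(\mathbb{C}, e^{-|z|^2})$ and $a(f_n) = f_n' \to h$ in $A^2(\mathbb{C}, e^{-|z|^2})$, and show that $f \in \mathrm{dom}(a)$ with $a(f) = h$. The key step is that, by the estimate \eqref{comp1}, convergence in the weighted $L^2$-norm forces uniform convergence on every compact subset of $\mathbb{C}$; thus $f_n \to f$ and $f_n' \to h$ locally uniformly. Since the $f_n$ are entire and converge locally uniformly, the Weierstrass theorem guarantees that $f_n' \to f'$ locally uniformly, whence $f' = h$. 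As $h \in A^2(\mathbb{C}, e^{-|z|^2})$, this gives $f' \in A^2(\mathbb{C}, e^{-|z|^2})$, i.e. $f \in \mathrm{dom}(a)$ and $a(f) = h$, so the graph of $a$ is closed.

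The closedness of $a^*$ follows by the same scheme and is in fact simpler: for $(g_n) \subset \mathrm{dom}(a^*)$ with $g_n \to g$ and $a^*(g_n) = z g_n \to k$ in $A^2(\mathbb{C}, e^{-|z|^2})$, the estimate \eqref{comp1} again yields locally uniform convergence, and since multiplication by $z$ is continuous for locally uniform convergence we get $z g_n \to z g$ locally uniformly, forcing $k = z g \in A^2(\mathbb{C}, e^{-|z|^2})$; hence $g \in \mathrm{dom}(a^*)$ and $a^*(g) = k$. Alternatively, since the computation preceding the lemma identifies $a^*$ as the Hilbert space adjoint of the densely defined operator $a$, one may simply invoke the general principle that the adjoint of a densely defined operator is automatically closed. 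I do not expect a genuine obstacle; the only point requiring care is the passage from norm convergence to convergence of the derivatives, and this is precisely what \eqref{comp1} together with the Weierstrass theorem delivers.
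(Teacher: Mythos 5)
Your proof is correct and follows essentially the same route as the paper: density via the polynomials, and closedness of $a$ by combining the pointwise estimate \eqref{comp1} with locally uniform convergence of the derivatives, after which $f'=h\in A^2(\mathbb{C},e^{-|z|^2})$ gives $f\in\mathrm{dom}(a)$. Your treatment of $a^*$ (either directly via the characterization $\mathrm{dom}(a^*)=\{g: zg\in A^2\}$ or by noting that adjoints of densely defined operators are closed) matches the paper's appeal to general properties of unbounded operators.
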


\begin{proof} By general properties of unbounded operators, it suffices to prove the assertion for $a$ (see \cite{Has10} or \cite{Wei}). Let $(f_j)_j$ be a sequence in ${\text{dom}} (a)$ such that $\lim_{j\to \infty}f_j=f$ and $\lim_{j\to \infty} a(f_j)=g.$ We have to show that $f\in {\text{dom}} (a)$ and $a(f)=g.$ By (\ref{comp1}) it follows that $(f_j)_j$ converges uniformly on each compact subset of $\mathbb C$ to $f$ and the same is true for the derivatives $\lim_{j\to \infty}f'_j=f'.$ This implies that $f'=a(f)=g.$ We supposed that $g\in  A^2(\mathbb{C}, e^{-|z|^2})$ and  we know that taking primitives does not leave $ A^2(\mathbb{C}, e^{-|z|^2}) .$ Therefore we get that
$f\in {\text{dom}} (a),$
which proves the assertion.
\end{proof}

For the rest of this section we consider the Fock space in several variables with the weight $\varphi (z)= |z_1|^2+ \dots + |z_n|^2.$
 We will denote the derivative with respect to $z$ by $\partial$ and
in the following we will consider the $\partial$-complex for the Fock space in several variables
 \begin{equation*}
  A^2_{(p-1,0)}(\mathbb{C}^n, e^{-|z|^2}) 
\underset{\underset{\partial^* }
\longleftarrow}{\overset{\partial }
{\longrightarrow}} A^2_{(p,0)}(\mathbb{C}^n, e^{-|z|^2}) \underset{\underset{\partial^* }
\longleftarrow}{\overset{\partial }
{\longrightarrow}} A^2_{(p+1,0)}(\mathbb{C}^n, e^{-|z|^2}),
\end{equation*}

where $A^2_{(p,0)}(\mathbb{C}^n, e^{-|z|^2})$ denotes the Hilbert space of $(p,0)$-forms with coefficients in 
$A^2(\mathbb{C}^n, e^{-|z|^2}), $ and 
$$\partial f = \sum_{|J|=p}\, ' \, \sum_{j=1}^n \frac{\partial f_J}{\partial  z_j}\, d z_j \wedge d z_J$$ 
for a $(p,0)$-form 
$$f=\sum_{|J|=p}\, ' \, f_J\,  d z_J$$ 
with summation over increasing multiindices $J=(j_1, \dots, j_p), \ 1\le p\le n-1;$
and we take
$${\text{dom}}(\partial ) = \{ f\in A^2_{(p,0)}(\mathbb{C}^n, e^{-|z|^2}) : \partial f \in A^2_{(p+1,0)}(\mathbb{C}^n, e^{-|z|^2}) \}.$$
Now let
$$\tilde\Box_p = \partial^* \partial + \partial \partial^*,$$
with ${\text{dom}} (\tilde\Box_p) =\{ f\in {\text{dom}}(\partial ) \cap {\text{dom}}(\partial^* ) : \partial f \in {\text{dom}}(\partial ^*) \ {\text{and}} \ \partial^* f \in {\text{dom}}(\partial )\}.$ 

Then $\tilde\Box_p $ acts as unbounded self-adjoint operator on $A^2_{(p,0)}(\mathbb{C}^n, e^{-|z|^2}),$ see \cite{Has10}.

\begin{rem}
(a) It  is pointed  out  that  a  $(1,0)$-form
$g=\sum_{j=1}^{n}g_j\,d z_j$ with
holomorphic  coefficients  is  invariant under the pull back
by  a  holomorphic  map  $F = (F_1,\dots, F_n):
\mathbb C^n \longrightarrow \mathbb C^n .$ We have
$$F^*g = \sum_{l=1}^n g_l \, d F_l =  \sum_{j=1}^{n}\left ( \sum_{l=1}^n g_l
\frac{\partial  F_l}{\partial  z_j} \right )
\,d  z_j,$$
where we used the fact that
$$d F_l = \partial  F_l + \ovprt  \,  F_l =  \sum_{j=1}^{n} \frac{\partial  F_l}{\partial z_j}\,d z_j + \sum_{j=1}^{n} \frac{\partial  F_l}{\partial \overline z_j}\, d\overline z_j = \sum_{j=1}^{n} \frac{\partial  F_l}{\partial z_j}\, d z_j .$$
 The expressions
$\frac{\partial F_l}{\partial  z_j}$ are  holomorphic.

\vskip 0.3 cm

(b) For $p=0$ and a function $f\in{\text{dom}} (\tilde\Box_0)$ we have 
$$\tilde\Box_0 f = \partial^* \partial f = \sum_{j=1}^nz_j \frac{\partial f}{\partial z_j}.$$

If $1\le p \le n-1$ and $f=\sum_{|J|=p}\, ' \, f_J\,  d z_J \in {\text{dom}} (\tilde\Box_p)$ is a $(p,0)$-form, we have
\begin{equation}\label{spec3}
\tilde \Box_p f = \sum_{|J|=p}\, ' \,  ( \sum_{k=1}^n z_k \frac{\partial f_J}{\partial z_k}+pf_J)\, dz_J.
\end{equation}

 For $p=n$ and a $(n,0)$-form $F \in {\text{dom}} (\tilde\Box_n)$ (here we identify the $(n,0)$-form with a function), we have 
$$\tilde\Box_n F = \partial \partial^* F = \sum_{j=1}^nz_j \frac{\partial F}{\partial z_j} + nF.$$
\end{rem}

Before we continue the study of the box operator $\tilde\Box_p,$ we collect some facts about Fock spaces with more general weights.
\vskip 1.5 cm

\section{ Generalized Fock spaces}
\vskip 1.5 cm

Let $\varphi : \mathbb C^n \longrightarrow \mathbb R$ be a plurisubharmonic $\mathcal C^\infty$ function.
Let 
$$A^2(\mathbb C^n, e^{-\varphi}) = \{ f: \mathbb C^n \longrightarrow \mathbb C \ {\text{entire}} : 
\|f \|_\varphi^2=\int_{\mathbb C^n} |f|^2 \, e^{-\varphi}\, d\lambda <\infty \},$$
with inner product 
$$(f,g)_\varphi = \int_{\mathbb C^n} f \, \overline g \, e^{-\varphi}\, d\lambda.$$
It is easily seen that $A^2(\mathbb C^n, e^{-\varphi})$ is a Hilbert space with the reproducing property. Hence it has a reproducing kernel $K_\varphi (z,w)$ (Bergman kernel) which has the following properties:
$K_\varphi (w,z) =\overline{K_\varphi (z,w)},$ the function $z\mapsto K_\varphi (z,w)$ belongs to $A^2(\mathbb C^n, e^{-\varphi})$ and
$$f(z) = \int_{\mathbb C^n} K_\varphi (z,w) \, f(w) \, e^{-\varphi (w)}\, d\lambda(w),$$
for each $f\in A^2(\mathbb C^n, e^{-\varphi}).$

The Bergman projection $P_\varphi : L^2(\mathbb C^n, e^{-\varphi}) \longrightarrow A^2(\mathbb C^n, e^{-\varphi})$ can be written in the form
$$P_\varphi F (z) =  \int_{\mathbb C^n} K_\varphi (z,w) \, F(w) \, e^{-\varphi (w)}\, d\lambda(w),$$
for $F\in L^2(\mathbb C^n, e^{-\varphi}).$
\begin{rem}
We indicate that $A^2(\mathbb C^n, e^{-\varphi})$ is infinite dimensional, if the lowest eigenvalue $\mu_\varphi$ of the Levi matrix 
$$\left ( \frac{\partial^2 \varphi}{\partial z_k \partial \overline z_j} \right )_{j,k=1}^n$$ 
satisfies $\lim_{|z|\to \infty} |z|^2\, \mu_\varphi (z) = +\infty,$ see \cite{Shi} or \cite{Has10}.
\end{rem} 

\vskip 0.3 cm

We study the $\partial$-complex
 \begin{equation*}
  A^2_{(p-1,0)}(\mathbb{C}^n, e^{-\varphi}) 
\underset{\underset{\partial^*_\varphi }
\longleftarrow}{\overset{\partial }
{\longrightarrow}} A^2_{(p,0)}(\mathbb{C}^n, e^{-\varphi}) \underset{\underset{\partial^*_\varphi }
\longleftarrow}{\overset{\partial }
{\longrightarrow}} A^2_{(p+1,0)}(\mathbb{C}^n, e^{-\varphi}),
\end{equation*}

where $A^2_{(p,0)}(\mathbb{C}^n, e^{-\varphi})$ denotes the Hilbert space of $(p,0)$-forms with coefficients in 
$A^2(\mathbb{C}^n, e^{-\varphi}), $ and 
$$\partial f = \sum_{|J|=p}\, ' \, \sum_{j=1}^n \frac{\partial f_J}{\partial  z_j}\, d z_j \wedge d z_J$$ 
for a $(p,0)$-form 
$$f=\sum_{|J|=p}\, ' \, f_J\,  d z_J$$ 
with summation over increasing multiindices $J=(j_1, \dots, j_p), \ 1\le p\le n-1;$
and we take
$${\text{dom}}(\partial ) = \{ f\in A^2_{(p,0)}(\mathbb{C}^n, e^{-\varphi} ): \partial f \in A^2_{(p+1,0)}(\mathbb{C}^n, e^{-\varphi}) \}.$$

The adjoint operator to $\partial$ depends on the weight:
$${\text{dom}}(\partial^*_\varphi ) = \{ g \in A^2_{(p+1,0)}(\mathbb{C}^n, e^{-\varphi}) : f \mapsto (\partial f,g)_\varphi \ {\text{is continuous on}} \, {\text{dom}}(\partial )\}.$$

We use the Gau\ss--Green Theorem in order to compute the adjoint $\partial^*_\varphi.$

Let $\Omega =\{ z\in \mathbb{C}^n \ : \ r(z)<0 \},$ where $r$ is a real valued $\mathcal{C}^1$-function with
$$\nabla_z r := ( \frac{\partial r}{\partial z_1}, \dots , \frac{\partial r}{\partial z_n})\neq 0$$
on $b\Omega = \{ z \, : \, r(z) =0 \}.$
Without loss of generality we can suppose that $|\nabla_z r|= |\nabla r |=1 $ on $b \Omega. $ For $u,v \in \mathcal{C}^\infty (\overline \Omega )$ and
$$(u,v)= \int_\Omega u(z) \overline{v(z)} \, d\lambda (z).$$
The Gau\ss--Green Theorem implies that
\begin{equation}\label{eq: partint}\left ( \frac{\partial u}{\partial z_k}, v \right ) = - \left ( u, \frac{\partial v}{\partial \overline z_k} \right )+ \int_{b\Omega }u(z)\, \overline{v(z)}\,\frac{\partial r}{\partial z_k}(z) \,d\sigma (z),
\end{equation}
where $d\sigma $ is the surface measure on $b\Omega .$
 
In our case we have holomorphic components  $f_J$ and $g_{jJ}$ and the inner product
$$\left (\frac{\partial f_J}{\partial z_j}, g_{jJ} \right )_\varphi = \int_{\mathbb C^n}\frac{\partial f_J}{\partial z_j}
\, \overline{ g_{jJ}}\, e^{-\varphi}\, d\lambda.$$
Now let $\Omega = \{ z : |z| <R\}$ and take $r(z)= \frac{|z|^2-R^2}{R}$ and apply  \eqref{eq: partint} to get 
\begin{equation}\label{pint}
\int_{|z|\le R} \frac{\partial f_J}{\partial z_j} \, \overline{ g_{jJ}}\, e^{-\varphi}\, d\lambda
- \int_{|z|\le R} f_J \, \overline{ \frac{\partial \varphi}{\partial z_j} g_{jJ}}\, e^{-\varphi}\, d\lambda
= \int_{|z|=R} f_J \, \overline{g_{jJ}}\, \frac{\overline{z_j}}{R}\, e^{-\varphi}\, d\sigma.
\end{equation}

By Cauchy-Schwarz we get
$$| \int_{|z|=R} f_J \, \overline{g_{jJ}}\, \frac{\overline{z_j}}{R}\, e^{-\varphi}\, d\sigma |^2 \le
 \int_{|z|=R} |f_J|^2\, e^{-\varphi}\, d\sigma \,   \int_{|z|=R} |g_{jJ}|^2\, e^{-\varphi}\, d\sigma,$$
 and as 
$$\|f_J\|^2_\varphi = \int_{\mathbb C^n} |f_J|^2\, e^{-\varphi}\, d\lambda = \int_0^\infty R^{2n-1} \int_{|z|=R} |f_J|^2 \, e^{-\varphi}\, d\sigma \, dR < \infty$$
the right hand side of \eqref{pint} tends to zero as $R$ tends to $\infty.$
So we obtain for the components of the $(p,0)$-form $f$ and the $(p+1,0)$-form $g$
\begin{eqnarray*}
\left (\frac{\partial f_J}{\partial z_j}, g_{jJ} \right )_\varphi &=& \left (f_J, \frac{\partial \varphi}{\partial \overline z_j}\, g_{jJ} \right )_\varphi \\
&=& \left (P_\varphi(f_J), \frac{\partial \varphi}{\partial \overline z_j}\, g_{jJ} \right )_\varphi \\
&=& \left (f_J,P_\varphi( \frac{\partial \varphi}{\partial \overline z_j}\, g_{jJ}) \right )_\varphi ,
\end{eqnarray*}
where we used the fact that the components $f_J$ are holomorphic. Hence we obtain
\begin{equation}\label{partint}
\partial^*_\varphi u = \sum_{|K|=p-1}\kern-7pt{}^{\prime}  \, \, \sum_{j=1}^n P_\varphi (\frac{\partial \varphi}{\partial \overline z_j}\, u_{jK})\,dz_K ,
\end{equation}
for a $(p,0)$-form $u\in {\text{dom}}(\partial^*_\varphi ).$

Similar to Lemma \ref{closed} one shows that $\partial : {\text{dom}}(\partial) \longrightarrow  A^2_{(p+1,0)}(\mathbb{C}^n, e^{-\varphi}) $ has closed graph.

Now let
$$\tilde\Box = \partial^*_\varphi \partial + \partial \partial^*_\varphi,$$
with ${\text{dom}} (\tilde\Box) =\{ f\in {\text{dom}}(\partial ) \cap {\text{dom}}(\partial^*_\varphi ) : \partial f \in {\text{dom}}(\partial ^*_\varphi) \ {\text{and}} \ \partial^*_\varphi f \in {\text{dom}}(\partial )\}.$ 

Then $\tilde\Box $ acts as unbounded self-adjoint operator on $A^2_{(p,0)}(\mathbb{C}^n, e^{-\varphi}),$ see \cite{ChSh}, \cite{Has10}, \cite{Str}.

\vskip 0.4 cm

In the following we prove an identity which is analogous to the Kohn-Morrey formula for the $\ovprt$-complex (see \cite{Str}, \cite{Has10}). Now an additional non-positive term appears, which vanishes for the weighted $\ovprt$-complex.
 
\begin{theorem}\label{lbasic1}
Let $u = \sum_{j=1}^n u_j \, dz_j \in A^2_{(1,0)}(\mathbb{C}^n, e^{-\varphi})$ and suppose that
 $u\in {\text{dom}}(\partial ) \cap {\text{dom}}(\partial^*_\varphi ).$ Then
 \begin{eqnarray*}
\|\partial u\|^2_\varphi + \|\partial^*_\varphi u\|^2_\varphi &=& \sum_{j,k=1}^n \int_{\mathbb C^n} \left | \frac{\partial u_j}{\partial z_k} \right |^2 \,  e^{-\varphi}\,d\lambda + \sum_{j,k=1}^n \int_{\mathbb C^n} \frac{\partial^2 \varphi}{\partial z_k \partial \overline z_j}\, u_j \overline u_k \,  e^{-\varphi}\,d\lambda \\
&-&  \sum_{j,k=1}^n \left (  \frac{\partial \varphi}{\partial \overline z_j}\,u_j-P_\varphi(\frac{\partial \varphi}{\partial \overline z_j}\,u_j) , \frac{\partial \varphi}{\partial \overline z_k}\,u_k \right )_\varphi.
 \end{eqnarray*}
\end{theorem}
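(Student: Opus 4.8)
The plan is to reduce the stated identity to the projection-free Kohn--Morrey identity by isolating the contribution of the Bergman projection $P_\varphi$. First I would compute both norms on the left explicitly. Writing $\partial u=\sum_{k<j}\bigl(\frac{\partial u_j}{\partial z_k}-\frac{\partial u_k}{\partial z_j}\bigr)\,dz_k\wedge dz_j$ and expanding the square gives
$$\|\partial u\|_\varphi^2=\sum_{j\neq k}\left\|\frac{\partial u_j}{\partial z_k}\right\|_\varphi^2-\sum_{j\neq k}\left(\frac{\partial u_j}{\partial z_k},\frac{\partial u_k}{\partial z_j}\right)_\varphi .$$
For the adjoint I would use \eqref{partint}, which for a $(1,0)$-form reads $\partial^*_\varphi u=\sum_{j=1}^n P_\varphi(v_j)$ with $v_j:=\frac{\partial\varphi}{\partial\overline z_j}\,u_j$. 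Setting $S:=\sum_{j=1}^n v_j$ we have $\partial^*_\varphi u=P_\varphi S$, and since $P_\varphi$ is an orthogonal projection,
$$\|\partial^*_\varphi u\|_\varphi^2=\|P_\varphi S\|_\varphi^2=\|S\|_\varphi^2-\|(I-P_\varphi)S\|_\varphi^2 .$$

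The key observation is that the third term in the statement equals $-\|(I-P_\varphi)S\|_\varphi^2$. Indeed,
$$\sum_{j,k=1}^n\bigl(v_j-P_\varphi v_j,\,v_k\bigr)_\varphi=\bigl((I-P_\varphi)S,\,S\bigr)_\varphi=\|(I-P_\varphi)S\|_\varphi^2 ,$$
the last equality because $I-P_\varphi$ is a self-adjoint idempotent. Hence the projection-dependent terms cancel and the theorem is equivalent to the projection-free identity
$$\|\partial u\|_\varphi^2+\|S\|_\varphi^2=\sum_{j,k=1}^n\left\|\frac{\partial u_j}{\partial z_k}\right\|_\varphi^2+\sum_{j,k=1}^n\int_{\mathbb C^n}\frac{\partial^2\varphi}{\partial z_k\partial\overline z_j}\,u_j\overline u_k\,e^{-\varphi}\,d\lambda .$$

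To prove this reduced identity I would introduce the formal weighted adjoint $\delta_k:=\frac{\partial\varphi}{\partial\overline z_k}\,\cdot\,-\,\frac{\partial}{\partial\overline z_k}$ of $\frac{\partial}{\partial z_k}$, so that $\bigl(\frac{\partial f}{\partial z_k},g\bigr)_\varphi=(f,\delta_k g)_\varphi$; this is exactly the integration by parts behind \eqref{pint}, and on holomorphic $u_j$ it gives $\delta_j u_j=v_j$. The crucial ingredient is the commutator relation $\bigl[\frac{\partial}{\partial z_k},\delta_j\bigr]=\frac{\partial^2\varphi}{\partial z_k\partial\overline z_j}$, a one-line computation from the definition of $\delta_j$. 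I would then expand $\|S\|_\varphi^2=\sum_{j,k}(\delta_j u_j,\delta_k u_k)_\varphi$, move $\delta_k$ back across the inner product as $\frac{\partial}{\partial z_k}$, apply the commutator to rewrite $\frac{\partial}{\partial z_k}\delta_j=\delta_j\frac{\partial}{\partial z_k}+\frac{\partial^2\varphi}{\partial z_k\partial\overline z_j}$, and move the remaining $\delta_j$ back as $\frac{\partial}{\partial z_j}$, obtaining
$$\|S\|_\varphi^2=\sum_{j,k=1}^n\left(\frac{\partial u_j}{\partial z_k},\frac{\partial u_k}{\partial z_j}\right)_\varphi+\sum_{j,k=1}^n\int_{\mathbb C^n}\frac{\partial^2\varphi}{\partial z_k\partial\overline z_j}\,u_j\overline u_k\,e^{-\varphi}\,d\lambda .$$
Adding $\|\partial u\|_\varphi^2$, the off-diagonal cross terms $\mp\sum_{j\neq k}\bigl(\frac{\partial u_j}{\partial z_k},\frac{\partial u_k}{\partial z_j}\bigr)_\varphi$ cancel, the surviving diagonal $\sum_j\|\frac{\partial u_j}{\partial z_j}\|_\varphi^2$ combines with $\sum_{j\neq k}\|\frac{\partial u_j}{\partial z_k}\|_\varphi^2$ into the full sum $\sum_{j,k}\|\frac{\partial u_j}{\partial z_k}\|_\varphi^2$, and the reduced identity follows.

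The main obstacle is analytic rather than algebraic: justifying the integrations by parts and the interchanges moving $\delta_j,\delta_k$ across the inner products for a general $u\in\mathrm{dom}(\partial)\cap\mathrm{dom}(\partial^*_\varphi)$, whose individual second-order quantities $\frac{\partial u_j}{\partial z_k}$ and whose $v_j$ are not a priori square-integrable. As in the derivation of \eqref{pint} this amounts to the vanishing of the boundary integrals over the spheres $|z|=R$ as $R\to\infty$. I would therefore first establish the identity for $u$ with polynomial components (a dense subspace on which every term is manifestly finite and the boundary terms vanish by the decay estimate used for \eqref{pint}), and then pass to the general case by approximation in the graph norm of $\partial\oplus\partial^*_\varphi$, using that the identity itself certifies the finiteness of $\sum_{j,k}\|\frac{\partial u_j}{\partial z_k}\|_\varphi^2$ together with the implicit hypothesis $v_j\in L^2(\mathbb C^n,e^{-\varphi})$ built into the last inner product of the statement.
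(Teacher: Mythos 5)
Your proof is correct, but it is organized differently from the paper's. The paper never separates the projection from the rest: it works throughout with the ``holomorphic adjoint'' $f\mapsto P_\varphi(\frac{\partial\varphi}{\partial\overline z_k}f)$ of $\frac{\partial}{\partial z_k}$ on the Fock space, writes the cross terms as $\sum_{j,k}\bigl(\bigl[\frac{\partial}{\partial z_k},P_\varphi\circ\frac{\partial\varphi}{\partial\overline z_j}\bigr]u_j,u_k\bigr)_\varphi$, and then splits that commutator (equation \eqref{tors2}) into $\bigl[\frac{\partial}{\partial z_k},P_\varphi\bigr]\circ\frac{\partial\varphi}{\partial\overline z_j}$, which produces the third term, plus $P_\varphi\circ\frac{\partial^2\varphi}{\partial z_k\partial\overline z_j}$, which produces the Levi form. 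You instead strip the projection at the outset via $\|\partial^*_\varphi u\|^2_\varphi=\|P_\varphi S\|^2_\varphi=\|S\|^2_\varphi-\|(I-P_\varphi)S\|^2_\varphi$, identify the theorem's last term as exactly $-\|(I-P_\varphi)S\|^2_\varphi$, and reduce to the classical projection-free Kohn--Morrey identity in the ambient $L^2$ space, proved with the formal adjoint $\delta_k$ and the commutator $\bigl[\frac{\partial}{\partial z_k},\delta_j\bigr]=\frac{\partial^2\varphi}{\partial z_k\partial\overline z_j}$. Your route buys two things: it makes the sign of the extra term transparent immediately (the paper needs the separate Theorem \ref{torsion} to show it equals $\|V\|^2_\varphi-\|P_\varphi V\|^2_\varphi$ and is the negative of a non-negative quantity), and it reuses the standard weighted-$L^2$ computation verbatim. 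The paper's route stays inside the Fock space and avoids ever forming $\frac{\partial u_j}{\partial\overline z_k}$ of non-holomorphic intermediaries, so its integrations by parts are only ever the one already justified in \eqref{pint}. On the analytic caveats you raise: the paper is no more careful than you are (its proof is equally formal once \eqref{pint} is granted), but be aware that your proposed density step is not free of charge --- for a general plurisubharmonic $\varphi$ polynomials need not belong to $A^2(\mathbb C^n,e^{-\varphi})$, let alone be dense in the graph norm, so if you want that route made rigorous you would need an additional hypothesis or a cutoff/regularization argument rather than polynomial approximation.
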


\begin{proof}
We get since 
$$\partial u = \sum_{j<k} \left ( \frac{\partial u_j}{\partial z_k}-
\frac{\partial u_k}{\partial z_j}\right )\,dz_j \wedge dz_k \ {\text{and}} \ 
\partial^*_\varphi u= \sum_{j=1}^n P_\varphi (\frac{\partial \varphi}{\partial \overline z_j}\, u_j )    $$
that
$$
\| \ovprt u \|^2_\varphi + \| \partial ^*_\varphi u\|^2_\varphi = 
\int_{\mathbb{C}^n}\sum_{j<k} \left |\frac{\partial u_j}{\partial z_k}-
\frac{\partial u_k}{\partial z_j}\right |^2 \, e^{-\varphi}\,d\lambda +
\int_{\mathbb{C}^n} \sum_{j,k =1}^n P_\varphi (\frac{\partial \varphi}{\partial \overline z_j}u_j)\,
\overline{P_\varphi(\frac{\partial \varphi}{\partial \overline z_k} u_k)}\,  e^{-\varphi}\,d\lambda $$
$$=\sum_{j,k =1}^n \int_{\mathbb{C}^n} \left |\frac{\partial u_j}{\partial z_k}
\right |^2 \,e^{-\varphi}\,d\lambda +
\sum_{j,k=1}^n  \int_{\mathbb{C}^n} \left ( P_\varphi(\frac{\partial \varphi}{\partial \overline z_j}\,u_j)
\overline{P_\varphi(\frac{\partial \varphi}{\partial \overline z_k} u_k)} - \frac{\partial u_j}{\partial z_k}\, 
\overline{\frac{\partial u_k}{\partial z_j}}\, \right ) \,e^{-\varphi}\,d\lambda $$
$$=\sum_{j,k =1}^n \int_{\mathbb{C}^n}  \left |\frac{\partial u_j}{\partial z_k}
\right |^2 \,e^{-\varphi}\,d\lambda + \sum_{j,k =1}^n  \int_{\mathbb{C}^n} \left [ \frac{\partial}{\partial z_k},P_\varphi \circ \frac{\partial \varphi}{\partial \overline z_j}\right ] \,u_j \, \overline{u}_k\, 
e^{-\varphi}\,d\lambda,$$

where we used the fact that for $f,g\in A^2(\mathbb{C}^n, e^{-\varphi})$ we have
$$\left ( \frac{\partial f}{\partial z_k}, g\right )_\varphi =
 \left ( f,P_\varphi(\frac{\partial \varphi}{\partial \overline z_k}g) \right )_\varphi
$$
and hence
$$
\left ( \left [ \frac{\partial}{\partial z_k},P_\varphi \circ \frac{\partial \varphi}{\partial \overline z_j}\right ] \,u_j , u_k \right )_\varphi =\left ( P_\varphi(\frac{\partial \varphi}{\partial \overline z_j}\,u_j), P_\varphi( \frac{\partial \varphi}{\partial \overline z_k}\,u_k) \right )_\varphi- \left (\frac{\partial u_j}{\partial z_k}, \frac{\partial u_k}{\partial  z_j} \right )_\varphi .$$
Since  we have
\begin{equation}\label{tors2}
\left ( \left [ \frac{\partial}{\partial z_k},P_\varphi \circ \frac{\partial \varphi}{\partial \overline z_j}\right ] \,u_j ,u_k \right )_\varphi =
\left ( \left [ \frac{\partial}{\partial z_k}, P_\varphi \right ] (\frac{\partial \varphi}{\partial \overline z_j}\,u_j) , u_k \right )_\varphi + \left ( \frac{\partial^2 \varphi}{\partial z_k \partial \overline z_j}\, u_j, u_k \right )_\varphi
 \end{equation}
and we have 
\begin{eqnarray*}
\left ( \left [ \frac{\partial}{\partial z_k}, P_\varphi \right ] (\frac{\partial \varphi}{\partial \overline z_j}\,u_j) , u_k \right )_\varphi 
&=& \left ( P_\varphi(\frac{\partial \varphi}{\partial \overline z_j}\,u_j),  P_\varphi(\frac{\partial \varphi}{\partial \overline z_k}\,u_k) \right )_\varphi - \left (\frac{\partial \varphi}{\partial \overline z_j}\,u_j,\frac{\partial \varphi}{\partial \overline z_k}\,u_k \right )_\varphi \\
&=&\left ( P_\varphi(\frac{\partial \varphi}{\partial \overline z_j}\,u_j) - \frac{\partial \varphi}{\partial \overline z_j}\,u_j,\frac{\partial \varphi}{\partial \overline z_k}\,u_k \right )_\varphi ,
\end{eqnarray*}
so we get the desired result.
\end{proof}

\begin{rem} 
The last term
$$\sum_{j,k=1}^n \left (  \frac{\partial \varphi}{\partial \overline z_j}\,u_j-P_\varphi(\frac{\partial \varphi}{\partial \overline z_j}\,u_j) , \frac{\partial \varphi}{\partial \overline z_k}\,u_k \right )_\varphi$$
vanishes for $\varphi (z) = |z_1|^2 +\dots + |z_n|^2,$ and
we obtain
\begin{equation}\label{eq: komo5}
\| \partial u \|^2_\varphi + \| \partial^*_\varphi u\|^2_\varphi =
\sum_{j,k =1}^n \int_{\mathbb{C}^n}  \left |\frac{\partial u_j}{\partial z_k}
\right |^2 \,e^{-|z|^2}\,d\lambda + \sum_{j =1}^n  \int_{\mathbb{C}^n}|u_j|^2\, e^{-|z|^2}\,d\lambda
\end{equation}

If $n=1$ and $u$ is a $(1,0)$-form, we have $\partial u=0$ and 
$$\|\partial u\|^2_\varphi + \|\partial^*_\varphi u\|^2_\varphi = \|\partial^*_\varphi u\|^2 _\varphi= \|u\|^2_\varphi+ \|u'\|_\varphi ^2.$$ 
\end{rem}

\vskip 0.3 cm
\begin{theorem}\label{torsion}
The last term in Theorem \ref{lbasic1} 
$$  \sum_{j,k=1}^n \left (  \frac{\partial \varphi}{\partial \overline z_j}\,u_j-P_\varphi(\frac{\partial \varphi}{\partial \overline z_j}\,u_j) , \frac{\partial \varphi}{\partial \overline z_k}\,u_k \right )_\varphi$$
is always non-negative; we have 
$$  \sum_{j,k=1}^n \left (  \frac{\partial \varphi}{\partial \overline z_j}\,u_j-P_\varphi(\frac{\partial \varphi}{\partial \overline z_j}\,u_j) , \frac{\partial \varphi}{\partial \overline z_k}\,u_k \right )_\varphi $$
  $$=  \sum_{j,k=1}^n \left ( \left [ \frac{\partial}{\partial z_k}, \frac{\partial \varphi}{\partial \overline z_j}\right ] \,u_j , u_k \right )_\varphi - 
 \sum_{j,k=1}^n \left (  \left [ \frac{\partial}{\partial z_k},P_\varphi \circ \frac{\partial \varphi}{\partial \overline z_j}\right ] \,u_j , u_k \right )_\varphi $$
 
 $$= \| R_\varphi v_1+ \dots + R_\varphi v_n \|^2_\varphi 
 = \|V\|^2_\varphi - \|P_\varphi V\|^2_\varphi,$$

where $R_\varphi$ denotes the orthogonal projection $R_\varphi = I -P_\varphi $ and 
$$V=\sum_{j=1}^n v_j = \sum_{j=1}^n\frac{\partial \varphi}{\partial \overline z_j}\,u_j.$$
\end{theorem}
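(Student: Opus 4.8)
The plan is to reduce the whole statement to the elementary fact that $R_\varphi = I - P_\varphi$ is the orthogonal projection onto the orthogonal complement of $A^2(\mathbb{C}^n, e^{-\varphi})$ inside $L^2(\mathbb{C}^n, e^{-\varphi})$, so that the quantity in question is nothing but a squared norm. First I would collect the $n$ summands into a single vector: writing $V = \sum_{j=1}^n v_j$ with $v_j = \frac{\partial \varphi}{\partial \overline{z}_j}\,u_j$ and using linearity of $P_\varphi$ (hence of $R_\varphi$), the double sum telescopes into
$$\sum_{j,k=1}^n \left( v_j - P_\varphi v_j,\, v_k \right)_\varphi = \left( \sum_{j=1}^n R_\varphi v_j,\ \sum_{k=1}^n v_k \right)_\varphi = (R_\varphi V, V)_\varphi .$$
This already exhibits the coherent sum $V$ as the natural object, rather than any term-by-term expression.

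Next I would invoke that $R_\varphi$ is a self-adjoint idempotent whose range is orthogonal to the range of $P_\varphi$. Splitting $V = P_\varphi V + R_\varphi V$ and using $R_\varphi V \perp P_\varphi V$, the cross term drops out and
$$(R_\varphi V, V)_\varphi = (R_\varphi V,\, R_\varphi V)_\varphi + (R_\varphi V,\, P_\varphi V)_\varphi = \| R_\varphi V \|^2_\varphi = \| R_\varphi v_1 + \dots + R_\varphi v_n \|^2_\varphi .$$
The Pythagorean identity $\|V\|^2_\varphi = \|P_\varphi V\|^2_\varphi + \|R_\varphi V\|^2_\varphi$ then yields $\|R_\varphi V\|^2_\varphi = \|V\|^2_\varphi - \|P_\varphi V\|^2_\varphi$, and non-negativity is immediate since the quantity has been identified with a norm squared.

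For the commutator reformulation I would reuse the computation already carried out in the proof of Theorem \ref{lbasic1}. A direct calculation shows the first-order terms cancel, leaving $\left[ \frac{\partial}{\partial z_k}, \frac{\partial \varphi}{\partial \overline{z}_j}\right] u_j = \frac{\partial^2 \varphi}{\partial z_k \partial \overline{z}_j}\,u_j$, so the first commutator sum equals $\sum_{j,k}\left( \frac{\partial^2 \varphi}{\partial z_k \partial \overline{z}_j}\,u_j, u_k\right)_\varphi$. Subtracting the second sum and applying identity \eqref{tors2}, the Levi term $\sum_{j,k}\left( \frac{\partial^2 \varphi}{\partial z_k \partial \overline{z}_j}\,u_j, u_k\right)_\varphi$ cancels and one is left with $-\sum_{j,k}\left( \left[ \frac{\partial}{\partial z_k}, P_\varphi\right]\!\big(\frac{\partial \varphi}{\partial \overline{z}_j}\,u_j\big), u_k\right)_\varphi$. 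The explicit formula for $\left[ \frac{\partial}{\partial z_k}, P_\varphi\right]$ established inside the proof of Theorem \ref{lbasic1} identifies this, after the sign change, precisely with $\sum_{j,k}\left( \frac{\partial \varphi}{\partial \overline{z}_j}\,u_j - P_\varphi(\frac{\partial \varphi}{\partial \overline{z}_j}\,u_j),\, \frac{\partial \varphi}{\partial \overline{z}_k}\,u_k\right)_\varphi$, closing the chain of equalities.

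There is essentially no analytic obstacle here: every step is an equality, and the only genuine inequality, non-negativity, becomes a tautology once the term is recognized as $\|R_\varphi V\|^2_\varphi$. The one point demanding care is the bookkeeping over the indices $j$ and $k$. One must pull $P_\varphi$ through the sum \emph{before} performing the Pythagorean splitting, and verify that the cross-index structure genuinely assembles into $(R_\varphi V, V)_\varphi$ with the single vector $V = \sum_j v_j$, rather than into the term-by-term expression $\sum_j \|R_\varphi v_j\|^2_\varphi$; these differ, and it is the coherent version that the formula produces.
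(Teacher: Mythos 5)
Your proposal is correct and follows essentially the same route as the paper: the commutator identities are reduced to \eqref{tors2} and the computation at the end of the proof of Theorem \ref{lbasic1}, and the double sum is recognized as $\|R_\varphi V\|^2_\varphi$ via the self-adjointness and idempotency of the projection $R_\varphi$. The only cosmetic difference is that you assemble $V$ first and then project, whereas the paper writes $\sum_{j,k}(R_\varphi v_j, R_\varphi v_k)_\varphi$ term by term before summing; the two manipulations are identical in substance.
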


\begin{proof}
Since 
$$\sum_{j,k=1}^n \left ( \left [ \frac{\partial}{\partial z_k}, \frac{\partial \varphi}{\partial \overline z_j}\right ] \,u_j , u_k \right )_\varphi = \sum_{j,k=1}^n \int_{\mathbb C^n} \frac{\partial^2 \varphi}{\partial z_k \partial \overline z_j}\, u_j \overline u_k \,  e^{-\varphi}\,d\lambda,$$
we get from \eqref{tors2} that 
$$ \sum_{j,k=1}^n \left ( \left [ \frac{\partial}{\partial z_k}, \frac{\partial \varphi}{\partial \overline z_j}\right ] \,u_j , u_k \right )_\varphi - 
 \sum_{j,k=1}^n \left (  \left [ \frac{\partial}{\partial z_k},P_\varphi \circ \frac{\partial \varphi}{\partial \overline z_j}\right ] \,u_j , u_k \right )_\varphi $$
 $$=-\sum_{j,k=1}^n \left (  \left [ \frac{\partial}{\partial z_k},P_\varphi \right ] \,(\frac{\partial \varphi}{\partial \overline z_j}\,u_j), u_k \right )_\varphi ,$$
 which equals
$$  \sum_{j,k=1}^n \left (  \frac{\partial \varphi}{\partial \overline z_j}\,u_j-P_\varphi(\frac{\partial \varphi}{\partial \overline z_j}\,u_j) , \frac{\partial \varphi}{\partial \overline z_k}\,u_k \right )_\varphi,$$
by the last computation in the proof of Theorem \ref{lbasic1}. This term can be written in the form 
\begin{eqnarray*}
 \sum_{j,k=1}^n (R_\varphi v_j, v_k)_\varphi &=& \sum_{j,k=1}^n (R_\varphi v_j, R_\varphi v_k)_\varphi \\
 &=& (R_\varphi v_1+ \dots + R_\varphi v_n , R_\varphi v_1+ \dots + R_\varphi v_n )_\varphi \\
 &=& \| R_\varphi v_1+ \dots + R_\varphi v_n \|^2_\varphi \\
 &=& \|V\|^2_\varphi - \|P_\varphi V\|^2_\varphi,
\end{eqnarray*}
and we are done.

\end{proof}
\begin{rem}
Notice that for $u = \sum_{j=1}^n u_j\, dz_j \in {\text{dom}}(\partial ) \cap {\text{dom}}(\partial^*_\varphi )$ we have 
\begin{equation}\label{formnorm}
\left \| \frac{\partial u_j}{\partial z_k} \right \|^2_\varphi = \left \| \frac{\partial \varphi}{\partial \overline z_k} \, u_j \right \|^2_\varphi - \int_{\mathbb C^n} \frac{\partial^2 \varphi}{\partial z_k \partial \overline z_k}\, |u_j|^2\, e^{-\varphi} \, d\lambda.
\end{equation}
This follows from 
\begin{eqnarray*}
\left \| \frac{\partial u_j}{\partial z_k} \right \|^2_\varphi = (P_\varphi ( \frac{\partial\varphi}{\partial \overline z_k}
\frac{\partial u_j}{\partial z_k}),u_j)_\varphi
&=& ( \frac{\partial\varphi}{\partial \overline z_k}
\frac{\partial u_j}{\partial z_k}),u_j)_\varphi = ( \frac{\partial u_j}{\partial z_k},\frac{\partial\varphi}{\partial  z_k}\, u_j)_\varphi\\
&=&- (u_j, \frac{\partial}{\partial \overline z_k} (\frac{\partial \varphi}{\partial z_k}\, u_j \, e^{-\varphi}))\\
&=& -(u_j,  \frac{\partial^2 \varphi}{\partial z_k \partial \overline z_k}\, u_j)_\varphi + (u_j, \frac{\partial \varphi}{\partial z_k} \, \frac{\partial \varphi}{\partial \overline z_k}\, u_j)_\varphi\\
&=& \left \| \frac{\partial \varphi}{\partial \overline z_k} \, u_j \right \|^2_\varphi - \int_{\mathbb C^n} \frac{\partial^2 \varphi}{\partial z_k \partial \overline z_k}\, |u_j|^2\, e^{-\varphi} \, d\lambda,
\end{eqnarray*}
where we used again that the components $u_j$ are holomorphic.
\end{rem}

\vskip 0.3 cm

Now we  generalize Theorem \ref{lbasic1} for $(p,0)$-forms  $u=\sum_{|J|=p}' u_J\,dz_J$ with coefficients in $A^2(\mathbb{C}^n, e^{-\varphi})$ where $1\le p\le n-1.$ We notice that 
$$\partial u = \sum_{|J|=p}\kern-1pt{}^{\prime}  \, \sum_{j=1}^n \frac{\partial u_J}{\partial  z_j}\, d z_j \wedge d z_J,$$
and 
$$\partial ^*_\varphi u =  \sum_{|K|=p-1}\kern-7pt{}^{\prime}  \, \, \sum_{j=1}^n P_\varphi (\frac{\partial \varphi}{\partial \overline z_j}\, u_{jK})\,dz_K.$$
We obtain
\begin{align*}
\| \partial u \|^2_\varphi+ \| \partial ^* _\varphi u\|^2_\varphi &= \sum_{|J|=|M|=p}\kern-10pt{}^{\prime} \kern8pt   \sum_{j,k=1}^n\, \epsilon_{jJ}^{kM}\, \int_{\mathbb{C}^n}\frac{\partial u_J}{\partial z_j}
\ovli{\frac{\partial u_M}{\partial z_k}}\, e^{-\varphi}\,d\lambda \\
&+  \sum_{|K|=p-1}\kern-7pt{}^{\prime}  \, \, \,  \sum_{j,k=1}^n\, \, \int_{\mathbb{C}^n}\, P_\varphi (\frac{\partial \varphi}{\partial \overline z_j}u_{jK}) \ovli{ P_\varphi (\frac{\partial \varphi}{\partial \overline z_k} u_{kK}})\, e^{-\varphi}\,d\lambda,
\end{align*}
where $ \epsilon_{jJ}^{kM}=0$ if $j\in J$ or $k\in M$ or if ${k} \cup M \neq {j} \cup J,$ and equals the sign of the permutation $\binom{kM}{jJ}$ otherwise.
The right-hand side  of the last formula can be rewritten as 
\begin{equation}\label {komo76}
\sum_{|J|=p}\,^{'} \, \sum_{j=1}^n\, \left \| \frac{\partial u_J}{\partial z_j}\right \|^2_\varphi +
 \sum_{|K|=p-1}\kern-10pt{}^{\prime}  \, \, \,  \sum_{j,k=1}^n \, \, \int_{\mathbb{C}^n}\left ( P_\varphi (\frac{\partial \varphi}{\partial \overline z_j}u_{jK}) \ovli{P_\varphi (\frac{\partial \varphi}{\partial \overline z_k} u_{kK}})\, - \frac{\partial u_{jK}}{\partial  z_k} \ovli{\frac{\partial u_{kK}}{\partial  z_j}} \right )\, e^{-\varphi}\,d\lambda,
\end{equation}
In order to prove this we first consider the (nonzero) terms where $j=k$ (and hence $M=J$). These terms result in the portion of the first sum in \eqref{komo76} where $j\notin J.$ On the other hand, when $j\neq k,$ then $j\in M$ and $k\in J,$ and deletion of $j$ from $M$ and $k$ from $J$ results in the strictly increasing  multi-index $K$ of length $p-1.$ Consequently, these terms can be collected into the second sum in \eqref{komo76} (in the part with the minus sign, we have interchanged the summation indices $j$ and $k$). In this sum, the terms where $j=k$ compensate for the terms in the first sum where $j\in J.$    

Now one can use the same reasoning as in the last proof to get 
\begin{equation}\label{komo77}
\| \partial u \|^2_\varphi + \| \partial ^* u\|^2_\varphi = 
\sum_{|J|=p}\,^{'}  \sum_{j=1}^n\, \left \| \frac{\partial u_J}{\partial z_j}\right \|^2_\varphi 
+ \sum_{|K|=p-1}\kern-10pt{}^{\prime}  \, \, \,  \sum_{j,k=1}^n \, \, \int_{\mathbb{C}^n}
\frac{\partial^2 \varphi}{\partial z_k \partial \overline z_j}\, u_{jK} \overline {u_{kK}} \,  e^{-\varphi}\,d\lambda 
\end{equation}
$$-\sum_{|K|=p-1}\kern-10pt{}^{\prime}  \, \, \,  \sum_{j,k=1}^n \, \,
\left ( \frac{\partial \varphi}{\partial \overline z_j}\,u_{jK} - P_\varphi(\frac{\partial \varphi}{\partial \overline z_j}\,u_{jK}) ,\frac{\partial \varphi}{\partial \overline z_k}\,u_{kK} \right )_\varphi.$$
 
\begin{rem}
For $\varphi (z)= |z_1|^2+ \dots + |z_n|^2$  we obtain 
\begin{equation}\label{komo66}
\| \partial u \|^2 + \| \partial ^* u\|^2 = 
\sum_{|J|=p}\,^{'}  \sum_{j=1}^n\, \left \| \frac{\partial u_J}{\partial z_j}\right \|^2 +
p \sum_{|J|=p}\,^{'}   \, \, \int_{\mathbb{C}^n} 
|u_J|^2\, e^{-|z|^2}\,d\lambda.
\end{equation}
\end{rem}

\vskip 1.5 cm

\section{The $\partial$-Neumann operator on the Fock space}

\vskip 1.5cm

As an immediate consequence of \eqref{eq: komo5} and \eqref{komo66} we get what is called the basic estimates.

\begin{lemma}\label{lbasic20}
Let $1\le p\le n-1$ and let $ u=\sum_{|J|=p}' u_J\,d z_J \in A^2_{(p,0)}(\mathbb{C}^n, e^{-|z|^2})$ and suppose that
 $u\in {\text{dom}}(\partial ) \cap {\text{dom}}(\partial^* ).$ Then
 \begin{equation}\label{basic3}
\|u\|^2 \le \frac{1}{p}\, (\|\partial u\|^2 + \|\partial^*u\|^2) .
 \end{equation}
\end{lemma}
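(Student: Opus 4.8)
The plan is to read off the inequality directly from the Kohn-Morrey-type identity \eqref{komo66} that was established just before the statement for the weight $\varphi(z)=|z_1|^2+\dots+|z_n|^2$. Recall that \eqref{komo66} reads
\[
\| \partial u \|^2 + \| \partial^* u\|^2 =
\sum_{|J|=p}{}^{'}\,\sum_{j=1}^n\left\| \frac{\partial u_J}{\partial z_j}\right\|^2
+ p\sum_{|J|=p}{}^{'}\int_{\mathbb{C}^n}|u_J|^2\,e^{-|z|^2}\,d\lambda.
\]
Since both terms on the right are manifestly non-negative, the first step is simply to discard the gradient sum, which only decreases the right-hand side.

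After dropping the first sum, I would observe that the remaining term is exactly $p$ times the squared norm of $u$: indeed, the monomials $dz_J$ for increasing $J$ of length $p$ are orthonormal in the form part, so
\[
\|u\|^2 = \sum_{|J|=p}{}^{'}\int_{\mathbb{C}^n}|u_J|^2\,e^{-|z|^2}\,d\lambda.
\]
Hence the identity \eqref{komo66} yields the one-sided bound
\[
\|\partial u\|^2 + \|\partial^* u\|^2 \ge p\,\|u\|^2,
\]
and dividing by $p>0$ (legitimate since $1\le p\le n-1$) gives \eqref{basic3} immediately. For $p=1$ the argument reduces to \eqref{eq: komo5}, which is the $p=1$ case of the same family of identities.

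There is essentially no obstacle here: the content of the lemma is entirely contained in the already-proven generalized Kohn-Morrey formula, and the only thing to check is that the coefficient normalization matches, i.e.\ that the integrals $\int_{\mathbb{C}^n}|u_J|^2 e^{-|z|^2}\,d\lambda$ summed over increasing $J$ reassemble the full form norm $\|u\|^2$. The one point deserving a word of care is that \eqref{komo66} was derived for forms $u\in{\text{dom}}(\partial)\cap{\text{dom}}(\partial^*)$, which is precisely the hypothesis of the lemma, so the identity is available without any additional density or approximation argument. Thus the proof is a one-line consequence of \eqref{komo66}.
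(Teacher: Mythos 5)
Your proof is correct and is exactly the argument the paper intends: the lemma is stated there as ``an immediate consequence of \eqref{eq: komo5} and \eqref{komo66}'', namely dropping the non-negative gradient sum in \eqref{komo66} and recognizing the remaining term as $p\,\|u\|^2$. No discrepancies.
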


The proof of the last results follows easily from the corresponding results for general Fock spaces, see Theorem \ref{lbasic1} and \eqref{komo66}. 

Now we can use the machinery of the classical $\ovprt$-Neumann operator to show the following results.

 \begin{lemma}\label{ima}
 Both operators $\partial$ and $\partial^*$ have closed range.
 
 If we  endow ${\text{dom}}(\partial ) \cap {\text{dom}}(\partial^* )$ with the graph-norm $ (\|\partial f\|^2 + \|\partial^*f\|^2)^{1/2},$ the dense subspace ${\text{dom}}(\partial ) \cap {\text{dom}}(\partial^* )$ of $A^2_{(p,0)}(\mathbb{C}^n, e^{-|z|^2})$ becomes a Hilbert space. 

 \end{lemma}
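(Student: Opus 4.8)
The whole argument rests on the basic estimate of Lemma~\ref{lbasic20}, which at level $p$ states that $p\,\|u\|^2 \le \|\partial u\|^2 + \|\partial^* u\|^2$ for every $u \in \operatorname{dom}(\partial)\cap\operatorname{dom}(\partial^*)$, together with two facts already available: $\partial$ has closed graph (proved as in Lemma~\ref{closed}) and $\partial^*$ is closed because it is an adjoint. The only external ingredient I will invoke is the standard criterion that a closed, densely defined operator $T$ between Hilbert spaces has closed range if and only if it is bounded below on $\operatorname{dom}(T)\cap(\ker T)^\perp$, together with the equivalence that $T$ has closed range precisely when $T^*$ does.

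For the closed range assertion I would treat the outgoing and incoming operators separately, writing $T\colon A^2_{(p,0)}\to A^2_{(p+1,0)}$ for $\partial$ on $(p,0)$-forms and $S\colon A^2_{(p-1,0)}\to A^2_{(p,0)}$ for $\partial$ on $(p-1,0)$-forms, so that the relevant $\partial^*$ at level $p$ is $S^*$ and $TS=0$. To see that $T$ has closed range, take $u\in\operatorname{dom}(T)\cap(\ker T)^\perp$; since $(\ker T)^\perp=\overline{\operatorname{ran}T^*}$ and $TS=0$ yields $S^*T^*=0$, I get $\operatorname{ran}T^*\subseteq\ker S^*$ and hence $u\in\ker S^*$, because $\ker S^*$ is closed. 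Thus $S^*u=0$, the basic estimate collapses to $p\,\|u\|^2\le\|Tu\|^2$, and $T$ is bounded below on $(\ker T)^\perp$, so it has closed range. Symmetrically, for $u\in\operatorname{dom}(S^*)\cap(\ker S^*)^\perp=\operatorname{dom}(S^*)\cap\overline{\operatorname{ran}S}$ I would use $\operatorname{ran}S\subseteq\ker T$ to conclude $Tu=0$, whence $p\,\|u\|^2\le\|S^*u\|^2$ and $S^*$ is bounded below on $(\ker S^*)^\perp$. Therefore $T$ and $S^*$, that is $\partial$ and $\partial^*$ at level $p$, both have closed range.

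For the second assertion I would equip $D:=\operatorname{dom}(\partial)\cap\operatorname{dom}(\partial^*)$ with the graph inner product $\langle f,g\rangle=(\partial f,\partial g)+(\partial^* f,\partial^* g)$. The basic estimate gives $\|f\|^2\le\frac1p(\|\partial f\|^2+\|\partial^* f\|^2)$, so this form is positive definite and the graph norm dominates the ambient Fock norm; in particular $D$ is a genuine inner product space. For completeness, a graph-Cauchy sequence $(f_m)$ makes $(f_m)$, $(\partial f_m)$ and $(\partial^* f_m)$ Cauchy in their respective Fock spaces, with limits $f$, $g$, $h$; the closedness of $\partial$ and $\partial^*$ then forces $f\in D$ with $\partial f=g$ and $\partial^* f=h$, so $f_m\to f$ in the graph norm and $D$ is complete, hence a Hilbert space.

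The genuinely delicate point, and the step I would write out most carefully, is the identification $(\ker T)^\perp=\overline{\operatorname{ran}T^*}$ and the passage $TS=0\Rightarrow S^*T^*=0$ with the attendant domain bookkeeping: one must verify that elements of $\overline{\operatorname{ran}T^*}$ really lie in $\ker S^*$ (using that $\ker S^*$ is closed and that the complex relation $\partial^2=0$ propagates to the adjoints on the correct domains), so that the unwanted term in the basic estimate vanishes and the two-sided form degenerates into the one-sided lower bound that yields closed range. Everything else is routine Hilbert space theory.
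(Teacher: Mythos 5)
Your proof is correct and follows essentially the same route as the paper: both arguments use the inclusion $(\ker\partial)^\perp=\overline{\operatorname{im}\partial^*}\subseteq\ker\partial^*$ coming from $\partial^2=0$ so that the basic estimate \eqref{basic3} degenerates to a one-sided lower bound on $\operatorname{dom}\cap(\ker)^\perp$, which yields closed range, and both obtain completeness of the graph-norm space from the basic estimate together with the closedness of $\partial$ and $\partial^*$. You merely spell out the symmetric argument for $\partial^*$ and the completeness verification that the paper delegates to general facts about unbounded operators.
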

 
 \begin{proof}
 We notice that ${\text{ker}}\partial= ({\text{im}} \partial^*)^\perp,$ which implies that
 $$({\text{ker}}\partial )^\perp = \overline{ {\text{im}} \partial^*} \subseteq {\text{ker}}\partial^*.$$
 If $u\in {\text{ker}}\partial \cap {\text{ker}}\partial^*,$ we have by \eqref{basic3} that $u=0.$ Hence
 \begin{equation}\label{eq: ima1}
({\text{ker}}\partial)^\perp = {\text{ker}}\partial^*.
\end{equation}
If $u\in {\text{dom}} (\partial) \cap ({\text{ker}}\partial )^\perp,$ then $u\in {\text{ker}}\partial^*,$ and \eqref{basic3} implies
$$\|u\| \le \frac{1}{p} \, \| \partial u \|.$$
Now we can use general results of unbounded operators on Hilbert spaces (see for instance \cite{Has10} Chapter 4) to show that ${\text{im}}\partial $ and  
${\text{im}}\partial^*$ are closed.
The last assertion follows again by   \eqref{basic3}, see \cite{Has10} Chapter 4.
 \end{proof}
 
 The next result describes the implication of the basic estimates \eqref{basic3} for the $\tilde\Box$-operator.
      
\begin{theorem}\label{sur}   
 The operator  $ \tilde\Box :  {\text{dom}}(\tilde\Box) \longrightarrow A^2_{(p,0)}(\mathbb{C}^n, e^{-|z|^2})$ is bijective and has a bounded inverse 
 $$\tilde N: A^2_{(p,0)}(\mathbb{C}^n, e^{-|z|^2}) \longrightarrow {\text{dom}}(\tilde\Box ). $$
In addition 
  \begin{equation}\label{cont5}
 \|\tilde N u\| \le \frac{1}{p}\, \|u\|,
 \end{equation}
 for each $u\in  A^2_{(p,0)}(\mathbb{C}^n, e^{-|z|^2}).$
 \end{theorem}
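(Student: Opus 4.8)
The plan is to derive both assertions from the basic estimate \eqref{basic3} together with the self-adjointness of $\tilde\Box$, following the abstract scheme for the $\ovprt$-Neumann operator in \cite{Has10}, Chapter 4. First I would record a coercivity estimate on ${\text{dom}}(\tilde\Box)$: for $f\in{\text{dom}}(\tilde\Box)$ the defining conditions $\partial f\in{\text{dom}}(\partial^*)$ and $\partial^* f\in{\text{dom}}(\partial)$ let me move the operators across the inner product, so that $(\tilde\Box f,f)=(\partial^*\partial f+\partial\partial^* f,f)=\|\partial f\|^2+\|\partial^* f\|^2$, and since $f\in{\text{dom}}(\partial)\cap{\text{dom}}(\partial^*)$ the estimate \eqref{basic3} gives $(\tilde\Box f,f)\ge p\,\|f\|^2$. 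Injectivity is then immediate, and by Cauchy--Schwarz $\|\tilde\Box f\|\,\|f\|\ge(\tilde\Box f,f)\ge p\,\|f\|^2$, hence
$$\|\tilde\Box f\|\ge p\,\|f\|\qquad (f\in{\text{dom}}(\tilde\Box)).$$

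Next I would use this lower bound to pass to bijectivity. The inequality shows $\tilde\Box$ is injective with closed range: if $\tilde\Box f_n\to g$ then $(f_n)$ is Cauchy, $f_n\to f$, and since $\tilde\Box$ is closed (being self-adjoint) we get $f\in{\text{dom}}(\tilde\Box)$ and $\tilde\Box f=g$. For density of the range I would invoke self-adjointness once more: $({\text{im}}\,\tilde\Box)^\perp={\text{ker}}\,\tilde\Box^*={\text{ker}}\,\tilde\Box=\{0\}$. Hence ${\text{im}}\,\tilde\Box$ is both closed and dense, so $\tilde\Box$ is onto, therefore bijective, and admits an inverse $\tilde N$. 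Applying the lower bound to $f=\tilde Nu$ yields $\|u\|=\|\tilde\Box\tilde Nu\|\ge p\,\|\tilde Nu\|$, which is exactly \eqref{cont5} and in particular shows $\tilde N$ is bounded.

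Alternatively one can construct $\tilde N$ directly by the Riesz representation theorem on the Hilbert space $H:={\text{dom}}(\partial)\cap{\text{dom}}(\partial^*)$ equipped with the inner product $(\partial\,\cdot\,,\partial\,\cdot\,)+(\partial^*\,\cdot\,,\partial^*\,\cdot\,)$, whose completeness is Lemma \ref{ima}: for $u\in A^2_{(p,0)}(\mathbb{C}^n,e^{-|z|^2})$ the functional $v\mapsto(v,u)$ is bounded on $H$ by \eqref{basic3}, so it is represented by a unique $\tilde Nu\in H$, and testing with $v=\tilde Nu$ again gives \eqref{cont5}. I expect the main obstacle to lie precisely in this constructive route, namely in the weak-to-strong step: passing from the form identity $(\partial v,\partial\tilde Nu)+(\partial^* v,\partial^*\tilde Nu)=(v,u)$ for all $v\in H$ to the assertions $\partial\tilde Nu\in{\text{dom}}(\partial^*)$, $\partial^*\tilde Nu\in{\text{dom}}(\partial)$ and $\tilde\Box\tilde Nu=u$, which requires the closed-range decomposition of Lemma \ref{ima} to keep track of the adjoint domains. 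The self-adjointness route above sidesteps this bookkeeping entirely, so I would present it as the main argument and relegate the form-theoretic construction to a remark.
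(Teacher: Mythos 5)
Your proposal is correct and follows essentially the same route as the paper: coercivity $(\tilde\Box f,f)=\|\partial f\|^2+\|\partial^* f\|^2\ge p\|f\|^2$ from \eqref{basic3}, from which injectivity and closed range follow, density of the range from self-adjointness, and then the bound \eqref{cont5} for $\tilde N$. The only cosmetic differences are that you derive \eqref{cont5} from the operator lower bound $\|\tilde\Box f\|\ge p\|f\|$ while the paper re-applies \eqref{basic3} to $\tilde N u$, and your Riesz-representation aside corresponds to the paper's subsequent discussion of the embedding $\iota$ and $\tilde N=\iota\circ\iota^*$.
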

 \begin{proof} 
  Since $(\tilde\Box  u,u)=\| \partial u \|^2 + \| \partial ^* u \|^2,$ it follows that for a convergent sequence $(\tilde\Box u_n)_n$ we get
 $$\|\tilde\Box  u_n -\tilde\Box u_m\| \, \|u_n-u_m\| \ge (\tilde\Box  (u_n-u_m),u_n-u_m)\ge  \|u_n-u_m\|^2,$$
 which implies that $(u_n)_n$ is convergent and since $\tilde\Box  $ is a closed operator we obtain that $\tilde\Box  $ has closed range. If $\tilde\Box  u =0,$ we get $\partial u=0$ and $\partial^* u =0$ and by (\ref{basic3}) that $u=0,$ hence $\tilde\Box  $ is injective. Using again general results on unbounded operators on Hilbert spaces we get that the range of $\tilde\Box $ is dense, therefore $\tilde\Box $ is surjective.

  We showed that
 $$ \tilde\Box  :  {\text{dom}}(\tilde\Box ) \longrightarrow A^2_{(p,0)}(\mathbb{C}^n, e^{-|z|^2})$$
 is bijective and therefore has a bounded inverse 
 $$\tilde N: A^2_{(p,0)}(\mathbb{C}^n, e^{-|z|^2}) \longrightarrow {\text{dom}}(\tilde\Box). $$
 For $u\in A^2_{(p,0)}(\mathbb{C}^n, e^{-|z|^2})$ we use \eqref{basic3} for $\tilde N u$ to obtain
 \begin{eqnarray*}
 \|\tilde N u\|^2 &\le & \frac{1}{p}\, (\|\partial \tilde N u\|^2 + \| \partial^* \tilde N u\|^2)\\
 &=& \frac{1}{p}\,((\partial^* \partial \tilde N u,\tilde N u) + (\partial \partial^* \tilde N u,\tilde N u))\\
 &=&\frac{1}{p}\,(u,\tilde N u)\\
 & \le &\frac{1}{p}\,\|u\| \, \|\tilde N u\|,
 \end{eqnarray*}
 which implies
 \eqref{cont5}.
 
\end{proof}

Following the classical $\ovprt$-Neumann calculus we obtain

\begin{theorem}\label{Nprop2}
 Let $\tilde N_p$ denote the inverse of $\tilde \Box $ on $A^2_{(p,0)}(\mathbb{C}^n, e^{-|z|^2}).$ Then
 \begin{equation}\label{comm1}
 \tilde N_{p+1} \partial= \partial \tilde N_p ,
\end{equation}
on ${\text{dom}}(\partial)$ and
\begin{equation}\label{comm2}
 \tilde N_{p-1} \partial^* = \partial^* \tilde N_p,
 \end{equation}
 on ${\text{dom}}(\partial^* ).$ 
 
 In addition we have that $\partial^* \tilde N_p$ is zero on $({\text{ker}} \partial )^\perp .$ 
 \end{theorem}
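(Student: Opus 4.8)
The plan is to run the standard $\ovprt$-Neumann commutation argument, whose only algebraic input is that $\partial\circ\partial=0$ and, dually, $\partial^*\circ\partial^*=0$. The first holds because the coefficients of $\partial\partial f$ involve $\frac{\partial^2 f_J}{\partial z_j\partial z_k}$ contracted against $dz_j\wedge dz_k$, which vanishes by symmetry of the second derivatives; the second follows by passing to adjoints (equivalently, from the explicit formula \eqref{partint}, where the contraction of the symmetric product $z_jz_k$ against the antisymmetric wedge vanishes). These give the operator identity $\partial\,\tilde\Box_p=\partial\partial^*\partial=\tilde\Box_{p+1}\,\partial$ on the appropriate domain, and together with $\tilde\Box_p\tilde N_p=I$ and the injectivity of $\tilde\Box_{p+1}$ from Theorem \ref{sur} they will force \eqref{comm1}.

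To prove \eqref{comm1} I would fix $u\in{\text{dom}}(\partial)$ and put $g=\partial\tilde N_p u$. The one delicate point is to verify $g\in{\text{dom}}(\tilde\Box_{p+1})$. Three of the four domain conditions are immediate: $\partial\tilde N_p u\in{\text{dom}}(\partial^*)$ because $\tilde N_p u\in{\text{dom}}(\tilde\Box_p)$, while $\partial g=\partial^2\tilde N_p u=0$ lies in every relevant domain. The remaining condition $\partial^* g\in{\text{dom}}(\partial)$ is the heart of the matter, and I would settle it using $\tilde\Box_p\tilde N_p u=u$ to write
$$\partial^* g=\partial^*\partial\tilde N_p u=u-\partial\partial^*\tilde N_p u,$$
where $u\in{\text{dom}}(\partial)$ by hypothesis and $\partial\partial^*\tilde N_p u\in{\text{dom}}(\partial)$ since $\partial(\partial\partial^*\tilde N_p u)=0$. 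Once $g\in{\text{dom}}(\tilde\Box_{p+1})$ is secured, a direct computation gives
$$\tilde\Box_{p+1}g=\partial\partial^*\partial\tilde N_p u=\partial(u-\partial\partial^*\tilde N_p u)=\partial u,$$
the last equality by $\partial^2=0$, and applying $\tilde N_{p+1}$ yields $\partial\tilde N_p u=\tilde N_{p+1}\partial u$.

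For \eqref{comm2} I would repeat this argument verbatim with the roles of $\partial$ and $\partial^*$ exchanged: for $u\in{\text{dom}}(\partial^*)$ put $h=\partial^*\tilde N_p u$, check $h\in{\text{dom}}(\tilde\Box_{p-1})$ using now $(\partial^*)^2=0$ together with the splitting $\partial\partial^*\tilde N_p u=u-\partial^*\partial\tilde N_p u$, and compute $\tilde\Box_{p-1}h=\partial^*\partial\partial^*\tilde N_p u=\partial^* u$. Alternatively, since $\tilde\Box_p$ is self-adjoint and positive its inverse $\tilde N_p$ is self-adjoint, and \eqref{comm2} drops out of \eqref{comm1} by passing to Hilbert-space adjoints; here one uses that $\partial\tilde N_p$ is in fact bounded, since $\|\partial\tilde N_p u\|^2\le(\tilde\Box_p\tilde N_p u,\tilde N_p u)=(u,\tilde N_p u)\le\frac1p\|u\|^2$ by \eqref{cont5}.

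Finally, the last assertion is immediate from \eqref{comm2} together with the identity $({\text{ker}}\partial)^\perp={\text{ker}}\partial^*$ recorded in \eqref{eq: ima1}: if $u\in({\text{ker}}\partial)^\perp={\text{ker}}\partial^*\subseteq{\text{dom}}(\partial^*)$, then $\partial^* u=0$, so $\partial^*\tilde N_p u=\tilde N_{p-1}\partial^* u=0$. Thus the only genuine obstacle in the whole proof is the domain bookkeeping for $g=\partial\tilde N_p u$ in the second paragraph, and it is resolved entirely by the splitting $\partial^*\partial\tilde N_p u=u-\partial\partial^*\tilde N_p u$ and the relation $\partial^2=0$.
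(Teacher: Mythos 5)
Your proof is correct and follows essentially the same route as the paper: both rest on the identity $\partial u=\partial\partial^*\partial\tilde N_p u=\tilde\Box_{p+1}\partial\tilde N_p u$ (coming from $\partial^2=0$ applied to $u=\tilde\Box_p\tilde N_p u$) followed by an application of $\tilde N_{p+1}$, and symmetrically for \eqref{comm2}. You additionally verify the domain membership $\partial\tilde N_p u\in{\text{dom}}(\tilde\Box_{p+1})$, which the paper leaves implicit, and you obtain the final assertion from \eqref{eq: ima1} together with \eqref{comm2} instead of the paper's duality computation $(\partial^*\tilde N_p k,u)=(k,\partial\tilde N_{p-1}u)=0$; both variants are valid.
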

 
 \begin{proof}
 For $u\in {\text{dom}}(\partial )$ we have $\partial u = \partial \partial^* \partial \tilde N_p u$ and
 $$\tilde N_{p+1} \partial u= \tilde N_{p+1}  \partial \, \partial^* \partial \tilde N_p u = \tilde N_{p+1} (  \partial \, \partial^* +  \partial^*\partial )  \partial \tilde N_p u = \partial \tilde N_p u,$$
 which proves \eqref{comm1}. In a similar way we get  \eqref{comm2}.
 
Now let $k \in ({\text{ker}} \partial )^\perp$ and $u\in {\text{dom}} (\partial ),$ then
$$(\partial^* \tilde N_p k, u)= (\tilde N_p k, \partial u)=(k, \tilde N_p\partial u)=(k, \partial \tilde N_{p-1}u)=0,$$
since $\partial \tilde N_{p-1}u \in {\text{ker}} (\partial ),$ which gives $\partial^* \tilde N_q k=0.$ 
 \end{proof}

Now we can also prove a solution formula for the equation  $\partial u =\alpha ,$ where $\alpha $ is a given $(p,0)$-form in $A^2_{(p,0)}(\mathbb{C}^n, e^{-|z|^2})$ with $\partial \alpha =0.$
\begin{theorem}\label{Nprop3}
 Let $\alpha \in A^2_{(p,0)}(\mathbb{C}^n, e^{-|z|^2})$ with $\partial \alpha =0.$ 
 Then $u_0=\partial^* \tilde N_p \alpha$ is the canonical solution of $\partial u =\alpha, $ this means $\partial u_0 =\alpha $ and $u_0 \in  ({\text{ker}}\, \partial )^\perp = {\text{im}} \, \partial^*,$ and  
 \begin{equation}\label{cont6}
\| \partial^* \tilde N_p\alpha \| \le p^{-1/2} \, \| \alpha \| . 
\end{equation}
\end{theorem}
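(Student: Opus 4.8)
The plan is to verify the two defining properties of the canonical solution and then the norm bound, using only the commutation relations of Theorem \ref{Nprop2}, the operator bound \eqref{cont5}, and the closed-range statement of Lemma \ref{ima}; the proof is essentially domain bookkeeping on top of these ingredients.

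First I would note that $\tilde N_p\alpha$ lies in $\text{dom}(\tilde\Box)$, so that $\tilde N_p\alpha\in\text{dom}(\partial)\cap\text{dom}(\partial^*)$, $\partial\tilde N_p\alpha\in\text{dom}(\partial^*)$, and in particular $u_0=\partial^*\tilde N_p\alpha\in\text{dom}(\partial)$; this keeps every manipulation inside the correct domains. The key point is that $\partial\tilde N_p\alpha=0$: by the commutation relation \eqref{comm1} and the hypothesis $\partial\alpha=0$,
$$\partial\tilde N_p\alpha=\tilde N_{p+1}\partial\alpha=0,$$
whence $\partial^*\partial\tilde N_p\alpha=0$ as well. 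Inserting this into $\tilde\Box\tilde N_p\alpha=\partial^*\partial\tilde N_p\alpha+\partial\partial^*\tilde N_p\alpha=\alpha$ makes the first summand disappear and yields $\partial u_0=\partial\partial^*\tilde N_p\alpha=\alpha$, which settles the equation $\partial u_0=\alpha$.

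Next I would address the orthogonality and uniqueness. Trivially $u_0=\partial^*\tilde N_p\alpha\in\text{im}\,\partial^*$, and by Lemma \ref{ima} together with \eqref{eq: ima1} the range $\text{im}\,\partial^*$ is closed and equals $\overline{\text{im}\,\partial^*}=(\ker\partial)^\perp$, so $u_0\in(\ker\partial)^\perp=\text{im}\,\partial^*$. If $u_1$ is any solution of $\partial u=\alpha$ lying in $(\ker\partial)^\perp$, then $u_0-u_1\in\ker\partial\cap(\ker\partial)^\perp=\{0\}$, so $u_0$ is the unique such solution, i.e. the canonical one.

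Finally, for \eqref{cont6} I would expand the norm and transfer $\partial^*$ to the other factor (legitimate since $u_0\in\text{dom}(\partial)$ and $\tilde N_p\alpha\in\text{dom}(\partial^*)$), using $\partial u_0=\alpha$:
$$\|u_0\|^2=(\partial^*\tilde N_p\alpha,u_0)=(\tilde N_p\alpha,\partial u_0)=(\tilde N_p\alpha,\alpha).$$
Cauchy-Schwarz and the bound \eqref{cont5} then give
$$\|u_0\|^2\le\|\tilde N_p\alpha\|\,\|\alpha\|\le\frac1p\,\|\alpha\|^2,$$
which is exactly \eqref{cont6}. I expect the only genuinely delicate point to be the domain bookkeeping behind $\partial^*\partial\tilde N_p\alpha=0$ and the admissibility of moving $\partial^*$ across the inner product; everything else follows directly from the previously established commutation relations and the norm bound on $\tilde N_p$.
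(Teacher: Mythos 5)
Your proof is correct and follows essentially the same route as the paper: start from $\tilde\Box\tilde N_p\alpha=\alpha$, show the term $\partial^*\partial\tilde N_p\alpha$ vanishes because $\partial\alpha=0$, check orthogonality to $\ker\partial$, and obtain \eqref{cont6} from $\|u_0\|^2=(\tilde N_p\alpha,\alpha)$ together with \eqref{cont5}. The only cosmetic difference is that you kill the unwanted term via the commutation relation $\partial\tilde N_p\alpha=\tilde N_{p+1}\partial\alpha=0$ from Theorem \ref{Nprop2}, whereas the paper pairs $\partial\partial^*\partial\tilde N_p\alpha=0$ against $\partial\tilde N_p\alpha$ to conclude $\|\partial^*\partial\tilde N_p\alpha\|^2=0$; both are legitimate.
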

\begin{proof}
For $\alpha \in A^2_{(p,0)}(\mathbb{C}^n, e^{-|z|^2})$ with $\partial \alpha =0$ 
 we get
\begin{equation}\label{id1}
\alpha = \partial \, \partial^* \tilde N_p \alpha +  \partial^* \, \partial \tilde N_p \alpha  .
\end{equation}
 If we apply $\partial $ to the last equality we obtain:
$$0 =\partial \alpha =\partial \partial^*  \partial  \tilde N_p \alpha ,$$
and since $\partial  \tilde N_p \alpha \in {\text{dom}}(\partial^* ) $ we have
\begin{equation}\label{yd1}
0=(\partial \, \partial ^*  \partial \tilde N_p \alpha, \partial  \tilde N_p \alpha )=( \partial ^*  \partial \tilde N_p \alpha ,  \partial ^*  \partial \tilde N_p \alpha )=\|  \partial ^*  \partial \tilde N_p \alpha \|^2.
\end{equation}
Finally we set $u_0=\partial^* \tilde N_p \alpha$ and derive from \eqref{id1} and \eqref{yd1} that for $\partial \alpha =0$ 
$$\alpha = \partial u_0 ,$$
and we see that $u_0 \bot \,{\text{ker}}\,\partial ,$ since for $h \in {\text{ker}}\,\partial $ we get 
$$(u_0,h)=(\partial^* \tilde N_p \alpha, h )= (\tilde N_p \alpha , \partial h)=0.$$
It follows that
\begin{eqnarray*}
\| \partial^*\tilde N_p \alpha \|^2&=& (\partial \, \partial^*\tilde N_p\alpha, \tilde N_p\alpha)\\
&=&(\partial \, \partial^*\tilde N_p\alpha, \tilde N_p\alpha)+
(\partial^* \partial \tilde N_p\alpha, \tilde N_p\alpha)\\
&=&(\alpha , \tilde N_p\alpha ) \le \|\alpha \| \, \| \tilde N_p\alpha \|
\end{eqnarray*} 
and using \eqref{cont5} we obtain
\begin{equation*}
\| \partial^* \tilde N_p\alpha \| \le p^{-1/2} \, \| \alpha \| .
\end{equation*}
\end{proof}
\vskip 1.5 cm

Now we discuss a different approach to the  operator $\tilde N$ which is related to the quadratic form
$$Q(u,v)= (\partial u,\partial v)+(\partial^*u,\partial^*v).$$
 For this purpose we consider the embedding
$$\iota :  {\text{dom}}(\partial )\cap  {\text{dom}}(\partial^*) \longrightarrow   A^2_{(p,0)}(\mathbb{C}^n, e^{-|z|^2}),$$ 
where $ {\text{dom}}(\partial )\cap  {\text{dom}}(\partial^*)$ is endowed with the graph-norm 
$$u\mapsto (\|\partial u\|^2 + \|\partial^*u \|^2)^{1/2}.$$ 
The graph-norm stems from the inner product
$$Q(u,v)=(u,v)_Q=(\tilde \Box u,v) = (\partial u,\partial v)+(\partial^*u,\partial^*v).$$
The basic estimates \eqref{basic3} imply that $\iota$ is a bounded operator with operator norm 
$$\|\iota \| \le \frac{1}{\sqrt p}.$$ 
By \eqref{basic3} it follows in addition that $ {\text{dom}}(\partial )\cap  {\text{dom}}(\partial^*)$  endowed with the graph-norm $u\mapsto (\|\partial u\|^2 + \|\partial^*u \|^2)^{1/2}$ is a Hilbert space, see Lemma \ref{ima}.

Since $(u,v)=(u,\iota v),$ we have  that $(u,v)=(\iota^*u,v)_Q.$ 

 For $u\in
   A^2_{(p,0)}(\mathbb{C}^n, e^{-|z|^2})$ and $v\in  {\text{dom}}(\partial)\cap  {\text{dom}}(\partial^*)$ we get
\begin{equation}\label{Q1}
   (u,v)=(\tilde \Box \tilde N u,v)=((\partial \partial^* + \partial^* \partial )\tilde Nu,v)=(\partial^* \tilde Nu, \partial^* v)+(\partial \tilde Nu, \partial v).
  \end{equation} 

Equation \eqref{Q1} suggests that  as an operator to ${\text{dom}}(\partial )\cap  {\text{dom}}(\partial^*),$ $\tilde N$ coincides with $\iota^*$ and as an operator to $ A^2_{(p,0)}(\mathbb{C}^n, e^{-|z|^2}), $ 
 $\tilde N$  is equal to $\iota \circ \iota^*,$ see \cite{Has10} or \cite{Str} for the details. 
 
 Hence $\tilde N$ is compact if and only if the embedding
$$\iota :  {\text{dom}}(\partial )\cap  {\text{dom}}(\partial^*) \longrightarrow   A^2_{(p,0)}(\mathbb{C}^n, e^{-|z|^2}),$$ 
is compact. This will be used to prove the following theorem.

\begin{theorem}\label{percol}
The operator $\tilde N :  A^2_{(p,0)}(\mathbb{C}^n, e^{-|z|^2}) \longrightarrow  A^2_{(p,0)}(\mathbb{C}^n, e^{-|z|^2}), \ 1 \le p \le n,$ is compact.
\end{theorem}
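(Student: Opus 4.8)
The plan is to exploit the factorization $\tilde N = \iota \circ \iota^*$ established above, which reduces the problem to showing that the embedding
$$\iota : {\text{dom}}(\partial)\cap{\text{dom}}(\partial^*) \longrightarrow A^2_{(p,0)}(\mathbb{C}^n, e^{-|z|^2})$$
is compact, where the domain carries the graph norm $u \mapsto (\|\partial u\|^2 + \|\partial^* u\|^2)^{1/2}$. The idea is to read off this graph norm explicitly in the orthonormal monomial basis \eqref{vons} and then verify the standard criterion for relative compactness in a Hilbert space: boundedness together with uniformly small tails.

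First I would expand each coefficient of $u = \sum_{|J|=p}\,'\, u_J\,dz_J$ as $u_J = \sum_\alpha u_{J,\alpha}\,\varphi_\alpha$. A direct computation gives $\partial \varphi_\alpha/\partial z_j = \sqrt{\alpha_j}\,\varphi_{\alpha - e_j}$, with $e_j$ the $j$-th unit multiindex, whence $\sum_{j=1}^n \|\partial u_J/\partial z_j\|^2 = \sum_\alpha |\alpha|\,|u_{J,\alpha}|^2$. Substituting this into the Kohn--Morrey identity \eqref{komo66} yields
$$\|\partial u\|^2 + \|\partial^* u\|^2 = \sum_{|J|=p}\,'\,\sum_\alpha (|\alpha| + p)\,|u_{J,\alpha}|^2, \qquad \|u\|^2 = \sum_{|J|=p}\,'\,\sum_\alpha |u_{J,\alpha}|^2.$$
Thus, in these coordinates, $\iota$ is the diagonal map with weights $(|\alpha|+p)^{-1/2}$, which already makes the mechanism of compactness transparent: the weights tend to $0$ as $|\alpha|\to\infty$, and each value $|\alpha|=k$ is attained by only finitely many multiindices.

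To turn this into a proof I would take a sequence $(u^{(m)})_m$ with graph norm $\le 1$. Each Fourier coefficient $u^{(m)}_{J,\alpha}$ is then bounded, so by a diagonal (Cantor) argument I can pass to a subsequence along which every coefficient converges. The crucial estimate is the uniform tail bound
$$\sum_{|J|=p}\,'\,\sum_{|\alpha| > N} |u^{(m)}_{J,\alpha}|^2 \le \frac{1}{N+p}\sum_{|J|=p}\,'\,\sum_{|\alpha|>N} (|\alpha|+p)\,|u^{(m)}_{J,\alpha}|^2 \le \frac{1}{N+p},$$
valid uniformly in $m$. Combining the coefficientwise convergence on the finite-dimensional low-frequency part $\{|\alpha|\le N\}$ with this equismallness of the tails shows that the subsequence is Cauchy in $A^2_{(p,0)}(\mathbb{C}^n, e^{-|z|^2})$; hence $\iota$ carries the graph-norm unit ball into a relatively compact set and is compact. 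By the equivalence recorded before the theorem, $\tilde N = \iota\circ\iota^*$ is then compact as well. The endpoint $p=n$ is covered by the same argument, using the identity $\tilde\Box_n F = \sum_j z_j\,\partial F/\partial z_j + nF$ so that the eigenvalue shift is $n$ rather than $p$.

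I expect the only genuine subtlety to be the passage from coefficientwise convergence to norm convergence; this is precisely where the weight $(|\alpha|+p)$ furnished by the basic estimate does the work, playing here the role that the compactness estimate plays in the classical $\ovprt$-Neumann theory. Everything else is bookkeeping with the monomial basis.
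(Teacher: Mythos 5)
Your proposal is correct and follows essentially the same route as the paper: both factor $\tilde N = \iota\circ\iota^*$, diagonalize the graph norm in the monomial basis via $\partial\varphi_\alpha/\partial z_j = \sqrt{\alpha_j}\,\varphi_{\alpha-e_j}$ and \eqref{komo66}, and exploit the weight $(|\alpha|+p)^{-1}$ on the tails $|\alpha|>N$. The only (immaterial) difference is that you verify compactness of $\iota$ by a diagonal extraction with uniformly small tails, whereas the paper exhibits $\iota$ as an operator-norm limit of the finite-rank truncations to $|\alpha|\le N$.
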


\begin{proof} First we consider the case when $p=1.$ We use \eqref{eq: komo5} for the graph norm on 
$ {\text{dom}}(\partial )\cap  {\text{dom}}(\partial^*)$ and indicate that it suffices to consider one component $u_j$ of the $(1,0)$-form $u.$ For this purpose we will denote $u_j$ by $f.$ We have to handle

$$ \sum_{k=1}^n \int_{\mathbb{C}^n}  \left |\frac{\partial f}{\partial z_k}
\right |^2 \,e^{-|z|^2}\,d\lambda +  \int_{\mathbb{C}^n}|f|^2\, e^{-|z|^2}\,d\lambda$$
for the graph-norm. We use the complete orthonormal system  \eqref{vons} $(\varphi_\alpha)_\alpha$ of $A^2(\mathbb{C}^n, e^{-|z|^2}).$
Let $f=\sum_{\alpha} f_\alpha \varphi_\alpha$ be an element of ${\text{dom}}(\partial )\cap  {\text{dom}}(\partial^*).$
We have $\iota (f)=f$ and hence
$$\iota (f) = \sum_{\alpha} (f,\varphi_\alpha)\varphi_\alpha$$
in $A^2(\mathbb{C}^n, e^{-|z|^2}).$
The basis elements $\varphi_\alpha$ are normalized in $A^2(\mathbb{C}^n, e^{-|z|^2}).$
First we have to compute the graph-norm of the basis elements $\varphi_\alpha.$ Notice that 
$$\frac{\partial \varphi_\alpha}{\partial z_k} = \frac{1}{\sqrt{\pi^n}} \, \frac{z_1^{\alpha_1}}{\sqrt{\alpha_1!}}\dots \frac{z_{k-1}^{\alpha_{k-1}}}{\sqrt{\alpha_{k-1}!}} \, \frac{\alpha_k z_k^{\alpha_{k}-1}}{\sqrt{\alpha_k!}} 
\frac{z_{k+1}^{\alpha_{k+1}}}{\sqrt{\alpha_{k+1}!}} \, \dots \frac{z_{n}^{\alpha_{n}}}{\sqrt{\alpha_{n}!}} \,= \sqrt{\alpha_k} \, \varphi_{(\alpha k-1)},$$
where $(\alpha k -1) =(\alpha_1, \dots \alpha_{k-1}, \alpha_k-1, \alpha_{k+1}, \dots , \alpha_n).$
 Hence the graph-norm of the basis elements $\varphi_\alpha$ equals 
$$ \| \varphi_\alpha \|_Q =(\|\varphi_\alpha\|^2 + \sum_{k=1}^n \|\frac{\partial\varphi_\alpha}{\partial z_k}\|^2)^{1/2} =\sqrt{1+|\alpha|},$$
where $|\alpha | = \alpha_1+ \dots + \alpha_n.$

Now let $$\psi_\alpha = \frac{\varphi_\alpha}{\sqrt{1+|\alpha|}}. $$ 
 Then $(\psi_\alpha)_\alpha$ constitutes a complete orthonormal system in  the Hilbert space ${\text{dom}}(\partial )\cap  {\text{dom}}(\partial^*)$ endowed with the graph-norm, notice that
$$(f,\psi_\alpha)_Q= \sum_{k=1}^n(\frac{\partial f}{\partial z_k},\ \frac{\partial \psi_\alpha}{\partial z_k}) + (f,\psi_\alpha)= \sum_{k=1}^n \frac{\alpha_k}{\sqrt{1+|\alpha |}} \, f_\alpha+\frac{1}{\sqrt{1+|\alpha|}} \, f_\alpha = \sqrt{1+|\alpha|} \, f_\alpha,$$
and we have
$$\iota (f)=f = \sum_{\alpha}  (f,\psi_\alpha)_Q \, \psi_\alpha.$$
For the norm of  $A^2(\mathbb{C}^n, e^{-|z|^2})$ we have
\begin{eqnarray*}
\|\iota(f) -  \sum_{|\alpha|\le N}  (f,\psi_\alpha)_Q \, \psi_\alpha \|^2 &=& \|  \sum_{|\alpha|\ge N+1}  (f,\psi_\alpha)_Q \, \psi_\alpha \|^2 \\
&=& \|  \sum_{|\alpha|\ge N+1}  \frac{1}{\sqrt{1+|\alpha|}}\,  (f,\psi_\alpha)_Q \, \varphi_\alpha \|^2 \\
&=& \sum_{|\alpha| \ge N+1} \left | \frac{1}{\sqrt{1+|\alpha|}}\,  (f,\psi_\alpha)_Q \right |^2 \\
&\le & \frac{ \| f\|_Q^2}{N+2} ,
\end{eqnarray*}
where we finally used Bessel's inequality for the Hilbert space  ${\text{dom}}(\partial )\cap  {\text{dom}}(\partial^*)$
endowed with the graph-norm. This proves that 
$$\iota: {\text{dom}}(\partial )\cap  {\text{dom}}(\partial^*) \longrightarrow A^2_{(1,0)}(\mathbb{C}^n, e^{-|z|^2})
$$ 
is a compact operator and the same is true for 
$$\tilde N : A^2_{(1,0)}(\mathbb{C}^n, e^{-|z|^2}) \longrightarrow A^2_{(1,0)}(\mathbb{C}^n, e^{-|z|^2}).$$

For arbitrary $p$ between $1$ and $n$ we can use \eqref{komo66} and the same reasoning as before to get the desired conclusion.
\end{proof}
Compare with the $\ovprt$-Neumann operator $N$ on $L^2(\mathbb C^n, e^{-|z|^2}):$ in this case $N$ fails to be compact, see \cite{Has10}.  This is related to the fact that the kernel of $\ovprt$ is large (it is the Fock space) in case of the $\ovprt$-complex, but the kernel of $\partial$ consists just of the constant functions in case of the $\partial$-complex on the Fock space.

\vskip 0.5 cm

In order to compute the spectrum of the operator $\tilde\Box_p$ we will use the following 
\begin{lemma}\label{davies} Let $A$ be a symmetric operator on a Hilbert space $H$ with domain ${\text{dom}}(A),$ and suppose that $(x_k)_k$ is a complete orthonormal system in $H.$ If each $x_k$ lies in ${\text{dom}}(A),$ and there exist $\lambda_k  \in \mathbb R$ such that 
$$A x_k=\lambda_k x_k$$
for every $k \in \mathbb N, $ then $A$ is essentially self-adjoint and the spectrum of $\overline A$ is the closure in $\mathbb R$ of the set of all $\lambda_k.$
\end{lemma}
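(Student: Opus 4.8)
The plan is to establish the two separate conclusions of Lemma \ref{davies} in order: first that $A$ is essentially self-adjoint, and then that the spectrum of its closure $\overline{A}$ equals the closure of $\{\lambda_k\}$. For essential self-adjointness I would invoke the standard basic criterion: a symmetric densely defined operator $A$ is essentially self-adjoint if and only if the deficiency subspaces $\ker(A^* \mp i)$ are both trivial. So the first task is to show that if $y \in {\text{dom}}(A^*)$ satisfies $A^* y = i y$ (and likewise for $-i$), then $y = 0$. Expanding $y = \sum_k c_k x_k$ in the complete orthonormal system, I would compute $(A^* y, x_k) = (y, A x_k) = \overline{\lambda_k}(y,x_k) = \lambda_k c_k$ using that each $x_k \in {\text{dom}}(A)$ and $\lambda_k \in \mathbb{R}$. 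On the other hand $A^* y = i y$ forces $(A^*y, x_k) = i c_k$. Hence $\lambda_k c_k = i c_k$ for every $k$; since the $\lambda_k$ are real this gives $c_k = 0$ for all $k$, so $y = 0$. The same argument with $-i$ disposes of the other deficiency space, and essential self-adjointness follows.

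Once $A$ is essentially self-adjoint, $\overline{A}$ is self-adjoint and its spectrum is a closed subset of $\mathbb{R}$. For the spectrum, I would argue both inclusions. Each $\lambda_k$ is an eigenvalue of $\overline{A}$ (since $x_k \in {\text{dom}}(A) \subseteq {\text{dom}}(\overline{A})$ and $\overline{A}x_k = \lambda_k x_k$), hence $\lambda_k \in \sigma(\overline{A})$; because the spectrum is closed, the closure $\overline{\{\lambda_k\}}$ is contained in $\sigma(\overline{A})$. For the reverse inclusion, let $\mu \in \mathbb{R} \setminus \overline{\{\lambda_k\}}$, so $d := \inf_k |\lambda_k - \mu| > 0$. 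I would then show $\overline{A} - \mu$ is bijective with bounded inverse by writing down the resolvent explicitly on the orthonormal basis: for $w = \sum_k (w,x_k)\,x_k$, set $R w = \sum_k (\lambda_k - \mu)^{-1}(w,x_k)\,x_k$, which converges since $|(\lambda_k-\mu)^{-1}| \le 1/d$, giving $\|Rw\| \le \|w\|/d$. One checks $R$ maps into ${\text{dom}}(\overline{A})$ and that $(\overline{A}-\mu)R = R(\overline{A}-\mu) = I$ on the appropriate domains, using the spectral action of $\overline{A}$ on the basis. Thus $\mu \in \rho(\overline{A})$, giving $\sigma(\overline{A}) \subseteq \overline{\{\lambda_k\}}$.

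The main obstacle, and the step requiring genuine care rather than routine verification, is the claim that $\overline{A}$ acts diagonally on the orthonormal system in the strong sense needed for the resolvent construction — that is, characterizing ${\text{dom}}(\overline{A})$ as exactly those $v = \sum_k c_k x_k$ with $\sum_k \lambda_k^2 |c_k|^2 < \infty$ and verifying $\overline{A}v = \sum_k \lambda_k c_k x_k$ there. This is where one must use the closedness of $\overline{A}$ together with the density of $\operatorname{span}\{x_k\}$: one approximates $v \in {\text{dom}}(\overline{A})$ by partial sums $v_N = \sum_{k \le N} c_k x_k$, notes $A v_N = \sum_{k\le N}\lambda_k c_k x_k$, and shows that $\overline{A}v$ must coincide with the limit $\sum_k \lambda_k c_k x_k$ by testing against each $x_m$ via $(\overline{A}v, x_m) = (v, \overline{A}^* x_m) = (v, A x_m) = \lambda_m c_m$. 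I expect this diagonalization to be the technical heart, after which the resolvent bound and both spectral inclusions are immediate. Much of this is standard spectral theory for essentially self-adjoint operators with a complete eigenbasis, so I would state the deficiency-space computation in full and then appeal to the standard machinery for the spectral identification, keeping the explicit resolvent as the concrete witness.
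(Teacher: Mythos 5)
The paper offers no proof of this lemma --- it simply cites Davies and \cite{Has10} --- and your argument is precisely the standard one found there: the deficiency subspaces $\ker(A^*\mp i)$ vanish by testing against the eigenbasis, and the spectrum is identified via the explicit diagonal resolvent. Your proof is correct, including the correctly flagged technical point that one must first establish that $\overline{A}$ acts diagonally on its full domain (equivalently, that $\overline{A}$ coincides with the self-adjoint diagonal operator on $\{v:\sum_k\lambda_k^2|(v,x_k)|^2<\infty\}$) before the resolvent bound and the two spectral inclusions go through.
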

See \cite{Dav} or \cite{Has10}.

\begin{theorem}\label{spectrum1}
The spectrum of $\tilde \Box_p,$ where $0\le p \le n,$ consists of all numbers $ m+p$ for $m=0,1,2,\dots,$ where $m+p$ has multiplicity $ { n+m-1 \choose n-1 } { n \choose p }.$
\end{theorem}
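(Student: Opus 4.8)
The plan is to diagonalize $\tilde\Box_p$ completely by exhibiting an explicit complete orthonormal system of eigenvectors, and then to read off the spectrum from Lemma \ref{davies}. The natural candidates for eigenvectors are the forms $\varphi_\alpha\, dz_J$, where $\varphi_\alpha$ ranges over the orthonormal basis \eqref{vons} of $A^2(\mathbb{C}^n,e^{-|z|^2})$ and $J$ ranges over the increasing multi-indices of length $p$. Since each $\varphi_\alpha$ is a polynomial, all its derivatives again lie in the Fock space, so these forms belong to ${\text{dom}}(\tilde\Box_p)$; moreover $\{\varphi_\alpha\, dz_J\}_{\alpha,J}$ is a complete orthonormal system in $A^2_{(p,0)}(\mathbb{C}^n,e^{-|z|^2})$, because the $\varphi_\alpha$ are complete in the coefficient space and the $dz_J$ form an orthonormal basis of the fibre.

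Next I would compute the eigenvalues. Applying \eqref{spec3} to $f=\varphi_\alpha\, dz_J$ gives
$$\tilde\Box_p(\varphi_\alpha\, dz_J)=\left(\sum_{k=1}^n z_k\,\frac{\partial \varphi_\alpha}{\partial z_k}+p\,\varphi_\alpha\right)dz_J.$$
The key observation is that the Euler operator $\sum_{k=1}^n z_k\,\partial/\partial z_k$ acts on the monomial $z^\alpha$ by multiplication by $|\alpha|=\alpha_1+\dots+\alpha_n$, since $z_k\,\partial_{z_k}z^\alpha=\alpha_k z^\alpha$. Hence $\sum_{k=1}^n z_k\,\partial_{z_k}\varphi_\alpha=|\alpha|\,\varphi_\alpha$, and therefore
$$\tilde\Box_p(\varphi_\alpha\, dz_J)=(|\alpha|+p)\,\varphi_\alpha\, dz_J,$$
so that $\varphi_\alpha\, dz_J$ is an eigenvector with eigenvalue $|\alpha|+p$. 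The cases $p=0$ and $p=n$ are read off in the same way from the explicit formulas for $\tilde\Box_0$ and $\tilde\Box_n$ recorded earlier.

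It then remains to count multiplicities and conclude. Writing $m=|\alpha|$, the eigenvalues are exactly the numbers $m+p$ for $m=0,1,2,\dots$. For fixed $m$, the number of multi-indices $\alpha\in\mathbb{N}_0^n$ with $|\alpha|=m$ equals $\binom{n+m-1}{n-1}$ by a standard stars-and-bars count, while the number of increasing $J$ with $|J|=p$ equals $\binom{n}{p}$. Since $\alpha$ and $J$ vary independently, the eigenspace for $m+p$ has dimension $\binom{n+m-1}{n-1}\binom{n}{p}$, which is the asserted multiplicity. Finally, $\tilde\Box_p$ is symmetric and admits the complete orthonormal system $\{\varphi_\alpha\, dz_J\}$ of eigenvectors with real eigenvalues, so Lemma \ref{davies} identifies the spectrum as the closure in $\mathbb{R}$ of $\{m+p:m\ge 0\}$; as this set of integers tends to $+\infty$ and has no finite accumulation point, it is already closed, and the spectrum is precisely $\{m+p:m=0,1,2,\dots\}$.

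The main point is not a deep obstacle but a matter of care: one must confirm that the polynomial forms $\varphi_\alpha\, dz_J$ genuinely lie in the (maximal) domain of the self-adjoint operator $\tilde\Box_p$ and that they are complete in the form space, so that Lemma \ref{davies} applies verbatim, and one must observe that passing to the closure adds nothing because the eigenvalues are discrete and unbounded.
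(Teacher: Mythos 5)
Your proposal is correct and follows essentially the same route as the paper: both diagonalize $\tilde\Box_p$ on the orthonormal system $\varphi_\alpha\,dz_J$ using \eqref{spec3} and the Euler operator identity $\sum_k z_k\partial_{z_k}\varphi_\alpha=|\alpha|\varphi_\alpha$, invoke Lemma \ref{davies}, and count multiplicities via $\binom{n+m-1}{n-1}$ monomials of degree $m$ times $\binom{n}{p}$ choices of $J$. Your added remarks on domain membership and on the closure adding nothing are just slightly more explicit versions of what the paper leaves implicit.
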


\begin{proof}
Recall that the monomials $(\varphi_\alpha)_\alpha,$ where $\alpha = (\alpha_1, \dots, \alpha_n)\in \mathbb N_0^n$ is a multiindex, constitute a complete orthonormal system in $A^2(\mathbb C^n, e^{-|z|^2}) .$ We use \eqref{spec3} and compute
$$\sum_{k=1}^n z_k \frac{\partial \varphi_\alpha}{\partial z_k}+p \varphi_\alpha = (|\alpha |+p) \varphi_\alpha.$$
We use Lemma \ref{davies} and indicate that there are $ { n+|\alpha|-1 \choose n-1 }$ monomials of degree $|\alpha |.$  Hence we get the assertion about the multiplicity from the fact that $A_{(p,0)}^2(\mathbb C^n, e^{-|z|^2})$ is the direct sum of ${ n \choose p }$ copies of $A^2(\mathbb C^n, e^{-|z|^2}) .$
\end{proof}
The last result implies also that the inverse $\tilde N_p$ of $\tilde \Box_p$ is a compact operator with the eigenvalues $1/(m+p).$

Note that the complex Laplacian $\Box_q$ of the $\ovprt$-complex on $L^2(\mathbb C^n, e^{-|z|^2})$ has 
also the eigenvalues $q+m$ for $m=0,1,2,\dots,$ but each of them have infinite multiplicity, see \cite{MaMa},\cite{Hasspec}, \cite{Has10}.
\vskip 1.5 cm

\section{The general $\partial$-complex}
\vskip 1.5cm
Now we return to the classical Fock space but
replace a single derivative with respect to $z_j$ by a differential operator of the form $p_j(\frac{\partial}{\partial z_1}, \dots , \frac{\partial}{\partial z_n}),$ where $p_j$ is a complex polynomial on $\mathbb C^n,$ see \cite{NS1}, \cite{NS2}. We consider the densely defined operators
\begin{equation}\label{gendef1}
Du = \sum_{j=1}^n p_j (u)\, dz_j,
\end{equation}
where $u\in A^2(\mathbb C^n, e^{-|z|^2})$ and $p_j(\frac{\partial}{\partial z_1}, \dots , \frac{\partial}{\partial z_n})$
are polynomial differential operators with constant coefficients.

More general we define
\begin{equation}\label{gendef1'}
Du =  \sum_{|J|=p}\, ' \,   \sum_{k=1}^n p_k (u_J)\, dz_k \wedge dz_J,
\end{equation}
where $u=  \sum_{|J|=p}\, ' \, u_J\, dz_j$ is a $(p,0)$-form with coefficients in $A^2(\mathbb C^n, e^{-|z|^2}).$

It is clear that $D^2=0$ and that we have
\begin{equation}\label{gendef2}
(Du, v) = (u, D^*v),
\end{equation}
where $u\in {\text{dom}}(D)= \{ u \in A^2_{(p,0)}(\mathbb{C}^n, e^{-|z|^2}): Du\in A^2_{(p+1,0)}(\mathbb{C}^n, e^{-|z|^2})\}$ and 
$$D^*v= \sum_{|K|=p-1}\, ' \, \sum_{j=1}^n p_j^*v_{jK}\, dz_K$$
for $v= \sum_{|J|=p}\, ' \, v_J\, dz_J$ and where $p_j^*(z_1, \dots ,z_n)$ is the polynomial $p_j$ with complex conjugate coefficients, taken as multiplication operator.

Now the corresponding $D$-complex has the form
 \begin{equation*}
  A^2_{(p-1,0)}(\mathbb{C}^n, e^{-|z|^2}) 
\underset{\underset{D^* }
\longleftarrow}{\overset{D }
{\longrightarrow}} A^2_{(p,0)}(\mathbb{C}^n, e^{-|z|^2}) \underset{\underset{D^* }
\longleftarrow}{\overset{D }
{\longrightarrow}} A^2_{(p+1,0)}(\mathbb{C}^n, e^{-|z|^2}).
\end{equation*}
In the sequel we consider the generalized box operator
$$\tilde \Box_{D,p}:= D^*D+DD^*$$
as a densely defined self-adjoint operator on $A_{(p,0)}^2(\mathbb C^n, e^{-|z|^2})$ with
 ${\text{dom}} (\tilde\Box_{D,p}) =\{ f\in {\text{dom}}(D) \cap {\text{dom}}(D^* ) : D f \in {\text{dom}}(D^*) \ {\text{and}} \ D^* f \in {\text{dom}}(D )\}.$

We want to find conditions under which $\tilde \Box_{D,1}$ has a bounded inverse. For this purpose we have to consider the graph norm $(\|Du\|^2+ \|D^*u\|^2)^{1/2}$ on ${\text{dom}}(D) \cap {\text{dom}}(D^* ).$

\begin{theorem}\label{basic5}
Let $u= \sum_{j=1}^n u_j dz_j \in {\text{dom}}(D) \cap {\text{dom}}(D^* )$
 and suppose that there exists a constant $C>0$ such that 
 \begin{equation}\label{comm1}
 \|u\|^2 \le C \sum_{j,k=1}^n ( [ p_k, p^*_j ] u_j, u_k).
 \end{equation}
Then 
\begin{equation}\label{basic6}
\|u\|^2 \le C ( \|Du\|^2+ \|D^*u\|^2).
\end{equation}
\end{theorem}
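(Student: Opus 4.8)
The plan is to establish an exact identity for the graph norm that generalizes the Kohn--Morrey type formula \eqref{eq: komo5}, and then to read off \eqref{basic6} immediately from the hypothesis \eqref{comm1}. The identity I aim for is
$$\|Du\|^2 + \|D^*u\|^2 = \sum_{j,k=1}^n \|p_k u_j\|^2 + \sum_{j,k=1}^n ([p_k,p_j^*]\,u_j,u_k),$$
valid for every $u=\sum_{j=1}^n u_j\,dz_j\in{\text{dom}}(D)\cap{\text{dom}}(D^*).$ Granting it, the first sum on the right is non-negative, so the left-hand side dominates $\sum_{j,k}([p_k,p_j^*]u_j,u_k);$ since the hypothesis \eqref{comm1} says exactly that this commutator form is at least $C^{-1}\|u\|^2,$ we conclude $\|u\|^2\le C(\|Du\|^2+\|D^*u\|^2),$ which is \eqref{basic6}.

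To prove the identity I would compute the two norms separately. Because $Du=\sum_{j,k}p_k(u_j)\,dz_k\wedge dz_j$ and $dz_k\wedge dz_j$ is alternating, the coefficient of $dz_k\wedge dz_j$ for $k<j$ is $p_k(u_j)-p_j(u_k),$ whence
$$\|Du\|^2=\sum_{k<j}\|p_k u_j-p_j u_k\|^2=\sum_{j,k=1}^n\|p_k u_j\|^2-\sum_{j,k=1}^n(p_k u_j,p_j u_k);$$
here the diagonal terms $j=k$ contribute $\|p_k u_k\|^2-(p_k u_k,p_k u_k)=0,$ so the two sums may be extended over all $j,k.$ From $D^*u=\sum_j p_j^*u_j$ I get $\|D^*u\|^2=\sum_{j,k}(p_j^*u_j,p_k^*u_k).$ The one analytic ingredient is that $p_j^*$ is the genuine Hilbert-space adjoint of $p_j$ on the Fock space --- this is the creation/annihilation duality underlying \eqref{gendef2}. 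Applying it in the two cross sums, $(p_k u_j,p_j u_k)=(p_j^*p_k u_j,u_k)$ and $(p_j^*u_j,p_k^*u_k)=(p_k p_j^*u_j,u_k),$ and adding, the middle terms combine into $\sum_{j,k}((p_k p_j^*-p_j^*p_k)u_j,u_k)=\sum_{j,k}([p_k,p_j^*]u_j,u_k),$ which yields the identity.

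As a check, for $p_k=\partial/\partial z_k$ one has $p_k^*=z_k$ and $[p_k,p_j^*]=\delta_{kj}I,$ so the identity collapses to $\|\partial u\|^2+\|\partial^*u\|^2=\sum_{j,k}\left\|\frac{\partial u_j}{\partial z_k}\right\|^2+\|u\|^2,$ which is precisely \eqref{eq: komo5}, and the hypothesis then holds with $C=1,$ recovering the basic estimate \eqref{basic3}. The algebra being straightforward, the real work lies in justifying the adjoint manipulations on the unbounded domains: one must verify that the products $p_j^*p_k u_j$ and $p_k p_j^*u_j$ lie in $A^2(\mathbb C^n,e^{-|z|^2})$ and that moving $p_j$ across the inner product produces no boundary contribution. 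I would carry out the computation first for $u$ whose components are polynomials, where the identity is a finite calculation in the orthonormal basis $(\varphi_\alpha)_\alpha$ and the duality is exact, and then pass to general $u\in{\text{dom}}(D)\cap{\text{dom}}(D^*)$ using that $D$ and $D^*$ have closed graphs (as in Lemma \ref{closed}) and that the polynomials form a core. Finally, the Hermitian symmetry $[p_k,p_j^*]^*=[p_j,p_k^*]$ shows the commutator form is real, so the hypothesis is meaningful, and the non-negativity of $\sum_{j,k}\|p_k u_j\|^2$ closes the argument.
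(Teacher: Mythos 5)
Your argument is correct and is essentially the paper's own proof: both expand $\|Du\|^2=\sum_{j<k}\|p_k u_j-p_j u_k\|^2$ and $\|D^*u\|^2=\sum_{j,k}(p_j^*u_j,p_k^*u_k)$, extend the sums over all $j,k$, and use the adjoint relation \eqref{gendef2} to combine the cross terms into $\sum_{j,k}([p_k,p_j^*]u_j,u_k)$, after which \eqref{comm1} gives \eqref{basic6} since $\sum_{j,k}\|p_k u_j\|^2\ge 0$. Your additional remarks on justifying the adjoint manipulations on a core of polynomial coefficients and the consistency check against \eqref{eq: komo5} are sound but do not change the route.
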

 \begin{proof}
  First we have 
 $$Du= \sum_{j<k} (p_k(u_j)-p_j(u_k))\, dz_j \wedge dz_k \ \ {\text{and}} \ \ 
 D^*u = \sum_{j=1}^n p_j^* u_j,$$
 hence
 $$ \|Du\|^2 + \|D^*u\|^2 = \int_{\mathbb C^n} \sum_{j<k} |p_k(u_j)-p_j(u_k)|^2 \, e^{-|z|^2} \,d\lambda
 + \int_{\mathbb C^n} \sum_{j,k=1}^n p_j^* u_j \, \overline{p_k^*u_k}\, e^{-|z|^2}\,d\lambda$$
 $$=  \sum_{j,k=1}^n \int_{\mathbb C^n} |p_k(u_j)|^2\, e^{-|z|^2}\,d\lambda + 
  \sum_{j,k=1}^n \int_{\mathbb C^n} (p_j^* u_j \, \overline{p_k^*u_k} - p_k(u_j)\overline{p_j(u_k)})\, e^{-|z|^2}\, d\lambda$$
 $$=  \sum_{j,k=1}^n \int_{\mathbb C^n} |p_k(u_j)|^2\, e^{-|z|^2}\,d\lambda +  
  \sum_{j,k=1}^n \int_{\mathbb C^n} [p_k, p_j^*] u_j \overline{u_k}\, e^{-|z|^2}\,d\lambda,$$
where we used \eqref{gendef2}.
Now the assumption \eqref{comm1} implies the desired result.
 \end{proof}

Let $1\le p\le n-1$ and let $ u=\sum_{|J|=p}' u_J\,d z_J \in A^2_{(p,0)}(\mathbb{C}^n, e^{-|z|^2})$ and suppose that
 $u\in {\text{dom}}(D ) \cap {\text{dom}}(D^* ).$
In a similar way as in \eqref{komo77} we get 
\begin{equation}\label{komo88}
\|Du\|^2 + \|D^*u\|^2 = \sum_{|J|=p}\, '  \sum_{k=1}^n \|p_k(u_J)\|^2 + \sum_{|K|=p-1}\, '  \sum_{j,k=1}^n \int_{\mathbb C^n} [p_k, p_j^*] u_{jK} \overline{u_{kK}}\, e^{-|z|^2}\,d\lambda,
\end{equation}
and if we suppose that 
\begin{equation} \label{komo99}
\|u\|^2 \le C  \sum_{|K|=p-1}\, ' \sum_{j,k=1}^n ( [ p_k, p^*_j ] u_{jK}, u_{kK})
\end{equation} 
we get the basic estimate \eqref{basic6}, which also implies that both ${\text{im}}D$ and ${\text{im}}D^*$ are closed, see for instance \cite{Has10}, Chapter 4.
With the basic estimate \eqref{basic6} we are now able to use the machinery of the corresponding Neumann operator - the bounded inverse of $\tilde\Box_{D,p}$ - (see Theorem \ref{sur} and Theorem \ref{Nprop3}) and get the following results

\begin{theorem}\label{genneumann}
Let $D$ be as in \eqref{gendef1'} and suppose that 
$$ \|u\|^2 \le C \sum_{|K|=p-1}\, ' \sum_{j,k=1}^n ( [ p_k, p^*_j ] u_{jK}, u_{kK}),$$
for all $u\in {\text{dom}}(D) \cap {\text{dom}}(D^* ).$ Then $\tilde\Box_{D,p}$ has a bounded inverse
$$\tilde N_{D,p} :  A^2_{(p,0)}(\mathbb{C}^n, e^{-|z|^2}) \longrightarrow  {\text{dom}}(\tilde\Box_{D,p}).$$
If $\alpha \in A^2_{(p,0)}(\mathbb{C}^n, e^{-|z|^2})$ satisfies $D\alpha =0,$ then $u_0= D^* \tilde N_{D,p} \alpha$
is the canonical solution of $Du=\alpha,$ this means $Du_0=\alpha$ and $u_0 \in  ({\text{ker}} D)^\perp = {\text{im}}D^*,$ and
$\| D^*\tilde N_{D,p} \alpha \| \le C \|\alpha\|,$ for some constant $C>0$ independent of $\alpha.$
\end{theorem}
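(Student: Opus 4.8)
The plan is to derive the basic estimate \eqref{basic6} from the hypothesis and then to transplant the entire Neumann machinery of Section 4 to the operator $D$, observing that $D$ and $D^*$ share all the formal properties of $\partial$ and $\partial^*$ that were actually used there, namely $D^2=0$, closedness, and a lower bound of basic-estimate type.

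First I would record that the hypothesis \eqref{komo99}, combined with the identity \eqref{komo88}, immediately yields — exactly as in the proof of Theorem \ref{basic5} — the basic estimate
$$\|u\|^2 \le C\,(\|Du\|^2 + \|D^*u\|^2), \qquad u\in {\text{dom}}(D)\cap{\text{dom}}(D^*).$$
Before invoking the abstract theory I would verify that $D$ is a closed operator, the analogue of Lemma \ref{closed}: if $f_n\to f$ and $Df_n\to g$ in the Fock norm, then by the reproducing-kernel estimate \eqref{comp1} the convergence is locally uniform, and hence so is the convergence of all derivatives, so that $p_k(u_{J,n})\to p_k(u_J)$ pointwise and therefore $g=Df$; since $g$ lies in the Fock space by hypothesis, $f\in{\text{dom}}(D)$. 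As in Lemma \ref{ima}, the relation $D^2=0$ gives ${\text{ker}}\,D=({\text{im}}\,D^*)^\perp$, while the basic estimate forces ${\text{ker}}\,D\cap{\text{ker}}\,D^*=\{0\}$; hence $({\text{ker}}\,D)^\perp={\text{ker}}\,D^*=\overline{{\text{im}}\,D^*}$ and both ${\text{im}}\,D$ and ${\text{im}}\,D^*$ are closed.

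Next I would repeat the argument of Theorem \ref{sur} verbatim with $\partial$ replaced by $D$: since $(\tilde\Box_{D,p}u,u)=\|Du\|^2+\|D^*u\|^2\ge C^{-1}\|u\|^2$, a convergent sequence $(\tilde\Box_{D,p}u_n)_n$ forces $(u_n)_n$ to converge, so $\tilde\Box_{D,p}$ has closed range; it is injective because $\tilde\Box_{D,p}u=0$ gives $Du=D^*u=0$ and hence $u=0$; and the general theory of unbounded operators yields dense range, so $\tilde\Box_{D,p}$ is bijective with a bounded inverse $\tilde N_{D,p}$ satisfying $\|\tilde N_{D,p}u\|\le C\|u\|$. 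Then, following Theorem \ref{Nprop3}, for $\alpha$ with $D\alpha=0$ I would use
$$\alpha = DD^*\tilde N_{D,p}\alpha + D^*D\tilde N_{D,p}\alpha;$$
applying $D$ and using $D\alpha=0$ gives $\|D^*D\tilde N_{D,p}\alpha\|^2=0$, so $\alpha=Du_0$ with $u_0=D^*\tilde N_{D,p}\alpha$, and the identity $(u_0,h)=(\tilde N_{D,p}\alpha,Dh)=0$ for $h\in{\text{ker}}\,D$ shows $u_0\in({\text{ker}}\,D)^\perp={\text{im}}\,D^*$. Finally the computation $\|u_0\|^2=(\alpha,\tilde N_{D,p}\alpha)\le\|\alpha\|\,\|\tilde N_{D,p}\alpha\|\le C\|\alpha\|^2$ yields the claimed estimate (with constant $\sqrt{C}$).

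The entire content is concentrated in establishing the basic estimate \eqref{basic6}; once it is in hand no new idea is needed, since $D$ behaves formally like $\partial$. The only points requiring a moment's care are the closedness of $D$ — the verification that polynomial differential operators with constant coefficients pass to the limit under Fock-norm convergence via the locally-uniform estimate \eqref{comp1} — and the standard but non-trivial step, imported from \cite{Has10}, that a lower bound on a symmetric operator together with a closed-range argument produces a bounded, everywhere-defined inverse. I do not expect any genuine obstacle beyond faithfully reproducing the Section 4 argument.
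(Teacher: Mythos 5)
Your proposal is correct and follows essentially the same route as the paper: the paper likewise obtains the basic estimate \eqref{basic6} from \eqref{komo88} together with the hypothesis \eqref{komo99}, and then invokes the machinery of Theorems \ref{sur} and \ref{Nprop3} verbatim with $\partial$ replaced by $D$. Your explicit verification of the closedness of $D$ (the analogue of Lemma \ref{closed}) and of the closed-range statements from Lemma \ref{ima} only spells out steps the paper leaves implicit.
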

 
\begin{ex}\label{ga1}

a) Let $ p_k = \frac{\partial^2}{\partial z_k^2}.$ Then $p^*_j(z)=z_j^2$ and we have
$$ \sum_{j,k=1}^n ( [ p_k, p^*_j ] u_j, u_k) = \sum_{j,k=1}^n (2\delta_{j,k} u_j,u_k) + \sum_{j,k=1}^n (4\delta_{jk} z_j \frac{\partial u_j}{\partial z_k},u_k)= 2\|u\|^2 + 4 \sum_{j=1}^n \left \| \frac{\partial u_j}{\partial z_j} \right \|^2,$$
for $u= \sum_{j=1}^n u_j dz_j \in {\text{dom}}(D) \cap {\text{dom}}(D^* ).$
Hence \eqref{comm1} is satisfied.

\vskip 0.3 cm
b) Let $n=2$ and take $p_1= \frac{\partial^2}{\partial z_1 \partial z_2}$ and $p_2=  \frac{\partial^2}{\partial z_1^2} +  \frac{\partial^2}{\partial z_2^2}.$ Then $p_1^*(z)= z_1z_2$ and $p_2^*(z)= z_1^2+z_2^2$ and we have
\begin{eqnarray*}
( [ p_1, p^*_1] u_1, u_1)&=& (u_1,u_1)+ (\frac{\partial u_1}{\partial z_1},\frac{\partial u_1}{\partial z_1})+ 
(\frac{\partial u_1}{\partial z_2},\frac{\partial u_1}{\partial z_2}),\\
 ( [ p_1, p^*_2 ] u_2, u_1)&=& 2(\frac{\partial u_2}{\partial z_1},\frac{\partial u_1}{\partial z_2})+
 2(\frac{\partial u_2}{\partial z_2},\frac{\partial u_1}{\partial z_1}),\\
 ( [ p_2, p^*_1 ] u_1, u_2)&=& 2(\frac{\partial u_1}{\partial z_1},\frac{\partial u_2}{\partial z_2})+
 2(\frac{\partial u_1}{\partial z_2},\frac{\partial u_2}{\partial z_1}),\\ 
( [ p_2, p^*_2 ] u_2, u_2)&=& 4(u_2,u_2)+4(\frac{\partial u_2}{\partial z_1},\frac{\partial u_2}{\partial z_1})+4
(\frac{\partial u_2}{\partial z_2},\frac{\partial u_2}{\partial z_2}) .
\end{eqnarray*}
 So we obtain
 
$$ \sum_{j,k=1}^2 ( [ p_k, p^*_j ] u_j, u_k) = \int_{\mathbb C^2}  (|u_1|^2+ 4|u_2|^2
+ \left |\frac{\partial u_1}{\partial z_1}+ 2 \frac{\partial u_2}{\partial z_2}\right |^2 + \left |\frac{\partial u_1}{\partial z_2}
+ 2\frac{\partial u_2}{\partial z_1}\right |^2 )\, e^{-|z|^2}\, d\lambda,
$$

for $u= \sum_{j=1}^2 u_j dz_j \in {\text{dom}}(D) \cap {\text{dom}}(D^* ).$ Again, \eqref{comm1} is satisfied.
\end{ex}
 \vskip 0.3 cm

 We remark that we can interchange the roles of $D$ and $D^*$ and obtain

\begin{theorem}\label{multop}
Suppose that $n>1$ and $1\le p \le n-1.$ Let $D$ be as in \eqref{gendef1'} and suppose that 
$$\|u\|^2 \le C  \sum_{|K|=p-1}\, ' \sum_{j,k=1}^n ( [ p_k, p^*_j ] u_{jK}, u_{kK}),$$
for all $u\in {\text{dom}}(D) \cap {\text{dom}}(D^* ).$ 
If $\beta \in A^2_{(p,0)}(\mathbb{C}^n, e^{-|z|^2})$ satisfies $D^*\beta =0,$ then $v_0= D \tilde N_{D,p} \beta
\in A^2_{(p+1,0)}(\mathbb{C}^n, e^{-|z|^2})$
is the canonical solution of $D^*v=\beta,$ this means $D^*v_0=\beta$ and $v_0\in ({\text{ker}} D^*)^\perp = {\text{im}}D,$ and
$\| D\tilde N_{D,p} \beta \| \le C \|\beta\|,$ for some constant $C>0$ independent of $\beta.$

\end{theorem}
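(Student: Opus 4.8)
The plan is to mirror the proof of Theorem \ref{Nprop3}, interchanging throughout the roles of $D$ and $D^*$. Since the hypothesis is exactly the basic estimate \eqref{komo99}, Theorem \ref{genneumann} already supplies the bounded inverse $\tilde N_{D,p}$ of $\tilde\Box_{D,p}$ together with the bound $\|\tilde N_{D,p}\gamma\| \le C\|\gamma\|$ (obtained just as \eqref{cont5} in Theorem \ref{sur}), and \eqref{basic6} guarantees that ${\text{im}}\,D$ is closed, so that $({\text{ker}}\,D^*)^\perp = {\text{im}}\,D$. I would start from the defining identity
$$\beta = \tilde\Box_{D,p}\tilde N_{D,p}\beta = D^*D\tilde N_{D,p}\beta + DD^*\tilde N_{D,p}\beta.$$

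The key step is to eliminate the second summand. Applying $D^*$ and using $D^*\beta = 0$ together with $(D^*)^2 = 0$ (the dual of $D^2=0$) gives $0 = D^*DD^*\tilde N_{D,p}\beta$. Since $D^*\tilde N_{D,p}\beta \in {\text{dom}}(D)$, pairing this with $D^*\tilde N_{D,p}\beta$ and moving $D^*$ across via \eqref{gendef2} yields
$$0 = (D^*DD^*\tilde N_{D,p}\beta, D^*\tilde N_{D,p}\beta) = \|DD^*\tilde N_{D,p}\beta\|^2,$$
so $DD^*\tilde N_{D,p}\beta = 0$. The identity above then collapses to $\beta = D^*D\tilde N_{D,p}\beta = D^*v_0$ with $v_0 = D\tilde N_{D,p}\beta \in A^2_{(p+1,0)}(\mathbb C^n, e^{-|z|^2})$, which is precisely $D^*v_0 = \beta$.

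For the membership $v_0 \in ({\text{ker}}\,D^*)^\perp$ I would argue directly: for any $h \in {\text{ker}}\,D^*$ one has $(v_0, h) = (D\tilde N_{D,p}\beta, h) = (\tilde N_{D,p}\beta, D^*h) = 0$, so $v_0 \perp {\text{ker}}\,D^*$, and since ${\text{im}}\,D$ is closed this gives $v_0 \in ({\text{ker}}\,D^*)^\perp = {\text{im}}\,D$. For the norm estimate, using $DD^*\tilde N_{D,p}\beta = 0$ to reinstate a vanishing term,
$$\|v_0\|^2 = (D^*D\tilde N_{D,p}\beta, \tilde N_{D,p}\beta) = (\tilde\Box_{D,p}\tilde N_{D,p}\beta, \tilde N_{D,p}\beta) = (\beta, \tilde N_{D,p}\beta) \le \|\beta\|\,\|\tilde N_{D,p}\beta\|,$$
and combining with $\|\tilde N_{D,p}\beta\| \le C\|\beta\|$ produces the claimed bound $\|D\tilde N_{D,p}\beta\| \le C\|\beta\|$ after relabeling the constant.

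The only genuine subtlety, and the step I would check most carefully, is the domain bookkeeping that licenses the adjoint relation \eqref{gendef2} at each stage: one needs $\tilde N_{D,p}\beta \in {\text{dom}}(\tilde\Box_{D,p})$ so that both $D^*\tilde N_{D,p}\beta \in {\text{dom}}(D)$ and $D\tilde N_{D,p}\beta \in {\text{dom}}(D^*)$, which is automatic since $\tilde N_{D,p}$ maps into ${\text{dom}}(\tilde\Box_{D,p})$, and one needs ${\text{im}}\,D$ closed so that $({\text{ker}}\,D^*)^\perp$ is genuinely equal to ${\text{im}}\,D$ rather than to its closure. Both are consequences of the basic estimate \eqref{basic6} and the general Hilbert-space results already invoked for Theorem \ref{genneumann}, so no new analytic input is required; the argument is purely the formal dual of Theorem \ref{Nprop3}.
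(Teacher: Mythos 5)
Your argument is correct and follows essentially the same route as the paper's own proof: both apply $D^*$ to the identity $\beta=\tilde\Box_{D,p}\tilde N_{D,p}\beta$, use $D^*\beta=0$ and $(D^*)^2=0$ to conclude $\|DD^*\tilde N_{D,p}\beta\|^2=0$, and then read off $D^*v_0=\beta$ together with the orthogonality $(v_0,h)=(\tilde N_{D,p}\beta,D^*h)=0$ for $h\in\ker D^*$. Your explicit derivation of the norm bound and the remarks on domain bookkeeping only make explicit what the paper leaves implicit.
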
 
\begin{proof}
As in Theorem \ref{genneumann} we get that  $\tilde\Box_{D,p}$ has a bounded inverse
$$\tilde N_{D,p} :  A^2_{(p,0)}(\mathbb{C}^n, e^{-|z|^2}) \longrightarrow  {\text{dom}}(\tilde\Box_{D,p}).$$
Now, using the $\ovprt$-Neumann calculus, we obtain that
$$0=D^*\beta = D^* (D^* D+ D D^*)\tilde N_{D,p} \beta = D^* D D^*\tilde N_{D,p} \beta,$$
hence
$$0=(D^* D D^*\tilde N_{D,p} \beta, D^*\tilde N_{D,p} \beta)=(D D^*\tilde N_{D,p} \beta,D D^*\tilde N_{D,p} \beta).$$
This implies that $D D^*\tilde N_{D,p} \beta=0$ and we get
$$D^*v_0= D^* D\tilde N_{D,p} \beta = (DD^*+D^*D)\tilde N_{D,p} \beta = \beta,$$
and $(v_0, f) = (D\tilde N_{D,p} \beta,f) = (\tilde N_{D,p} \beta, D^*f)=0,$ for all $f\in {\text{ker}}D^*.$

\end{proof} 

\begin{ex} We take Example \ref{ga1} b) and consider $f = f_1\,dz_1+ f_2\,dz_2 \in A^2_{(1,0)}(\mathbb{C}^2, e^{-|z|^2})$ such that $D^*f = p_1^* f_1+ p_2^*f_2=0.$ By Theorem \ref{multop} we get 
$$g=g \,dz_1\wedge dz_2 =D\tilde N_{D,1} f \in A^2_{(2,0)}(\mathbb{C}^2, e^{-|z|^2})$$ 
such that $D^*g = -p_2^*g\, dz_1 + p_1^*g\,dz_2= f$ and $\| D\tilde N_{D,1} f \| \le C \|f\|,$ for some constant $C>0.$
\end{ex}

\begin{rem}
Finally we point out that the $\partial$-Neumann operator $\tilde N_{D,p}$ exists and is bounded on $A^2_{(p,0)}(\mathbb{C}^n, e^{-|z|^2})$ if and only if the basic estimate \eqref{basic6} holds, see for instance \cite{Has10}, Remark 9.12., for the details.
\end{rem}

\vskip 2 cm

\bibliographystyle{amsplain}
\bibliography{mybibliography}

\end{document}